\numberwithin{equation}{section}
\numberwithin{figure}{section}
\theoremstyle{plain}
\newtheorem{thm}{\protect\theoremname}[section]
  \theoremstyle{definition}
  \newtheorem{defn}[thm]{\protect\definitionname}
  \theoremstyle{remark}
  \newtheorem{rem}[thm]{\protect\remarkname}
  \theoremstyle{plain}
  \newtheorem{lem}[thm]{\protect\lemmaname}
  \theoremstyle{plain}
  \newtheorem{cor}[thm]{\protect\corollaryname}
  \theoremstyle{plain}
  \newtheorem{prop}[thm]{\protect\propositionname}
  \providecommand{\corollaryname}{Corollary}
  \providecommand{\definitionname}{Definition}
  \providecommand{\lemmaname}{Lemma}
  \providecommand{\propositionname}{Proposition}
  \providecommand{\remarkname}{Remark}
\providecommand{\theoremname}{Theorem}
\begin{document}

\title{Reduced $L_{q,p}$-Cohomology of Some  Twisted Products}

\author{Vladimir Gol$'$dshtein}
\address{Department of Mathematics, Ben Gurion University of the Negev, P.O.B. 653, Beer-Sheva 84105,
Israel}
\email{vladimir@bgu.ac.il}
\author{Yaroslav Kopylov}
\address{Sobolev Institute of Mathematics, Pr. Akad. Koptyuga 4,
630090, Novosibirsk, Russia and  Novosibirsk State University,
ul.~Pirogova 2, 630090, Novosibirsk, Russia }
\email{yakop@math.nsc.ru}
\thanks{The second-named author was partially supported by the State Maintenance Program 
for the Leading Scientific Schools and Junior Scientists of the Russian Federation 
(Grant~NSh--2263.2014.1)}

\begin{abstract}
Vanishing results for reduced $L_{p,q}$-cohomology are established in the case 
of twisted products, which are a~generalization of warped products. Only the case 
$q \leq p$ is considered. This is an extension of some results by Gol$'$dshtein, 
Kuz$'$minov  and Shvedov about the $L_{p}$-cohomology of warped cylinders. 
One of the main observations is the vanishing of the ``middle-dimensional'' cohomology 
for a large class of manifolds. 

\textit{Mathematics Subject Classification.} 58A10, 58A12.

\textit{Key words and phrases}: differential form, reduced $L_{q,p}$-cohomology,
twisted cylinder. 
\end{abstract}

\maketitle

\section{Introduction}

The \textit{$L_{q,p}$-cohomology} $H_{q,p}^{k}(M)$ of a Riemannian manifold $(M,g)$ is defined 
to be the quotient of the space of closed $p$-integrable differential forms by the exterior 
differentials of $q$-integrable forms. The quotient space of $H_{q,p}^{k}(M)$ 
by the closure of zero is called the \textit{reduced $L_{q,p}$-cohomology} $\overline{H}_{q,p}^{k}(M)$. 

In~\cite{GT2009}, Gol$'$dshtein and Troyanov established some nonvanishing results 
for the reduced $L_{q,p}$-cohomology $\overline{H}_{q,p}^{k}(M)$ of~$M$
in the case of simply-connected complete manifolds with negative curvature.
They obtained sufficient conditions for the nonvanishing of the space $\overline{H}_{q,p}^{k}(M)$
when $M$ is a~Cartan--Hadamard manifold (that is, a~complete 
simply-connected Riemannian manifold of nonpositive sectional curvature). Every Cartan-Hadamard
manifold is bi-Lipschitz equivalent to a so-called twisted product outside any geodesic ball.

A \textit{twisted product} $X\times_{h}Y$ of two Riemannian manifolds 
$(X,g_{X})$ and $(Y,g_{Y})$ is the direct product manifold $X\times_{g}Y$ endowed
with a~Riemannian metric of the form 
\begin{equation}
g:=g_{X}+h^{2}(x,y)g_{Y},\label{tp-metric}
\end{equation}
where $h:X\times Y\to\mathbb{R}$ is a~smooth positive function (see
\cite{Chen81}). If $X$ is a~half-interval $[a,b[$ then the twisted product $X\times_{h}Y$ 
is called a~{\it twisted cylinder}.
                                                                                                    
We refer to an $m$-dimensional Riemannian manifold $(M,g_M)$ as 
an~\textit{asymptotic twisted product} (respectively, as an {\it asymptotic twisted cylinder})
if, outside an $m$-dimensional compact submanifold,
it is bi-Lipschitz equivalent to a~twisted product (respectively, to a~twisted cylinder). 
A~Cartan--Hadamard manifold is an asymptotic twisted cylinder of this type. Other examples 
are manifolds with flat ends, manifolds with cuspidal singularities etc.

In this paper, we prove some vanishing results for the (reduced) $L_{q,p}$-cohomology
of twisted cylinders $[a,b)\times_{h}N$ for a~positive smooth function
$h:[a,b)\times N\to\mathbb{R}$ in the case where the base $N$ is a closed manifold and 
$p\ge q>1$. 

If in~(\ref{tp-metric}) the function~$h$ depends only on~$x$ then we obtain the familiar notion 
of a~\textit{warped product} (see \cite{BiON}). Twisted products were the object of recent 
investigations~\cite{BDDO2012,DBZ2012,Fa2013,FGKU2001,KJKP2005,PoRe93}.
The reduced $L_{q,p}$-cohomology of warped cylinders $[a,b)\times_{h}N$,
i.e., of product manifolds $[a,b)\times N$ endowed with a warped product
metric 
\[
g=dt^{2}+h^{2}(t)g_{N},
\]
where $g_{N}$ is the Riemannian metric of~$N$ and $h:[a,b)\to\mathbb{R}$
is a positive smooth function, was studied by Gol$'$dshtein, Kuz$'$minov,
and Shvedov \cite{GKSh90_1}, Kuz$'$minov and Shvedov \cite{KSh93,KSh96}
(for $p=q$), and Kopylov \cite{Kop07} for $p,q\in[1,\infty)$, 
$\frac{1}{q}-\frac{1}{p}<\frac{1}{\dim N+1}$.

The main results of this paper are technical. Here we mention a ``universal'' consequence of 
the main results on the vanishing of the ``middle-dimensional'' cohomology:

\medskip
{\it Let $N$ be a closed smooth $n$-dimensional Riemannian manifold.
If $p\ge q>1$, $b=\infty$, and $\frac{n}{p}$
is an integer, then \hbox{$\overline{H}_{q,p}^{\frac{n}{p}}([0,\infty)\times_{h} N)=0$}. 

In particular, if $1<q\le 2$ and $n$ is even then
\hbox{$\overline{H}_{q,2}^{\frac{n}{2}}([0,\infty)\times_{h} N)=0$}.
}
\medskip

The result was not known even for $L_2$-cohomology. It does not depend on the~type of the warped 
Riemannian metric. Of course, the result leads to the~vanishing of the ``middle-dimensional'' cohomology 
for asymptotic twisted cylinders.

\smallskip
For Cartan--Hadamard manifolds, the main results imply: 

\medskip
{\it 
Let $(M,g)$ be an $m$-dimensional Cartan-Hadamard manifold (that is, 
a complete simply-connected Riemannian manifold of nonpositive sectional 
curvature) with sectional curvature $K\leq-1$ and Ricci curvature 
$Ric\ge-(1+\epsilon)^{2}(m-1)$. If $p\ge q>1$ and $\frac{m-1}{p}$ 
is an integer then $\overline{H}_{q,p}^{\frac{m-1}{p}+1}(M,g)=0$.

In particular,
\begin{itemize} 
\item if $1<q\le 2$ and $m$ is odd then \textup{$\overline{H}_{q,2}^{\frac{m-1}{2}+1}(M,g)=0;$}
\item if $p >1$ and $\frac{m-1}{p}$ is an~integer then 
$\overline{H}_{p}^{\frac{m-1}{p}+1}(M,g)=\overline{H}_{p'}^{\frac{m-1}{p'}}(M,g)=0;$ 
\item if $m$ is odd then \textup{$\overline{H}_{2}^{\frac{m-1}{2}}(M,g)=\overline{H}_{2}^{\frac{m+1}{2}}(M,g)=0$.}
\end{itemize}
}

\section{Basic Definitions}

In this section, we recall the main definitions and notations.

In what follows, we tacitly assume all manifolds to be oriented.

Let $M$ be a smooth Riemannian manifold. Denote by $\mathcal{D}^{k}(M):=C_{0}^{\infty}(M,\Lambda^{k})$
the space of all smooth differential $k$-forms with compact support
contained in~\mbox{$M\setminus\partial M$} and designate as $L_{loc}^{1}(M,\Lambda^{k})$
the space of locally integrable differential forms. Denote by $L^{p}(M,\Lambda^{k})$
the Banach space of locally integrable differential $k$-forms endowed
with the norm $\|\theta\|_{L^{p}(M,\Lambda^{k})}:=\left(\int_{M}|\theta|^{p}dx\right)^{\frac{1}{p}}<\infty$
(as usual, we identify forms coinciding outside a~set of measure
zero).
\begin{defn}
We call a~differential $(k+1)$-form $\theta\in L_{loc}^{1}(M,\Lambda^{k+1})$
\emph{the weak exterior derivative} (or \emph{differential}) of a
differential $k$-form $\phi\in L_{loc}^{1}(M,\Lambda^{k})$ and write
$d\phi=\theta$ if 
\[
\int_{M}\theta\wedge\omega=(-1)^{k+1}\int_{M}\phi\wedge d\omega
\]
 for any $\omega\in\mathcal{D}^{n-k}(M)$. \end{defn}
\begin{rem}
Note that the orientability of $M$ is not substantial since one may
take integrals over orientable domains on $M$ instead of integrals
over $M$.  

We then introduce an analog of Sobolev spaces for differential
$k$-forms, the space of $q$-integrable forms with $p$-integrable
weak exterior derivative: 
\[
\Omega_{q,p}^{k}(M)=\left\{ \,\omega\in L^{q}(M,\Lambda^{k})\;|\, d\omega\in L^{p}(M,\Lambda^{k+1})\right\} ,
\]
 This is a Banach space for the graph norm 
\[
\|\omega\|_{\Omega_{q,p}^{k}(M)}=\left(\|\omega\|_{L^{q}(M,\Lambda^{k})}^{2}+\|d\omega\|_{L^{p}(M,\Lambda^{k+1})}^{2}\right)^{1/2}.
\]
 The space $\Omega_{q,p}^{k}(M)$ is a reflexive Banach space for
any $1<q,p<\infty$. This can be proved using standard arguments of
functional analysis.
\end{rem}
Denote by $\Omega_{q,p,0}^{k}(M)$ the closure of $\mathcal{D}^{k}(M)$
in the norm of $\Omega_{q,p}^{k}(M)$. We now define our basic ingredients
(for three parameters $r,q,p$).
\begin{defn}
Put

(a) $Z_{p,r}^{k}(M)=\mathrm{Ker}[d:\Omega_{p,r}^{k}(M)\to L^{r}(M,\Lambda^{k+1})]$.

(b) $B_{q,p}^{k}(M)=\mathrm{Im}[d:\Omega_{q,p}^{k-1}(M)\to L^{p}(M,\Lambda^{k})]$.\end{defn}
\begin{lem}
The subspace $Z_{p,r}^{k}(M)$ does not depend on $r$ and is a closed
subspace in $L^{p}(M,\Lambda^{k})$. \end{lem}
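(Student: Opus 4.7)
The plan is to unpack the definitions and observe that the lemma is really two elementary statements about distributional differentiation. First, for independence of $r$: an element of $Z_{p,r}^{k}(M)$ is an $\omega\in L^{p}(M,\Lambda^{k})$ whose weak exterior derivative $d\omega$ exists and equals zero (since the kernel is taken). The zero form lies in $L^{r}(M,\Lambda^{k+1})$ for every $r\in[1,\infty]$, so the integrability constraint $d\omega\in L^{r}$ is automatic once one imposes $d\omega=0$. Hence
\[
Z_{p,r}^{k}(M)=\{\omega\in L^{p}(M,\Lambda^{k}):d\omega=0\text{ weakly}\}
\]
is the same set for every $r$, which is precisely the first assertion.

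For closedness in $L^{p}(M,\Lambda^{k})$, I would take a sequence $(\omega_{n})\subset Z_{p,r}^{k}(M)$ with $\omega_{n}\to\omega$ in $L^{p}$ and test against an arbitrary $\eta\in\mathcal{D}^{n-k-1}(M)$. Because $\eta$ is smooth and compactly supported, $d\eta$ belongs to $L^{p'}(M,\Lambda^{n-k})$ (with $p'$ the Hölder conjugate of $p$), so Hölder's inequality gives
\[
\left|\int_{M}(\omega-\omega_{n})\wedge d\eta\right|\le\|\omega-\omega_{n}\|_{L^{p}}\,\|d\eta\|_{L^{p'}}\longrightarrow 0.
\]
Since $d\omega_{n}=0$, the definition of the weak exterior derivative yields $\int_{M}\omega_{n}\wedge d\eta=0$ for every $n$; passing to the limit gives $\int_{M}\omega\wedge d\eta=0$ for every test $\eta$, which is exactly the statement that $d\omega=0$ weakly. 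Hence $\omega\in Z_{p,r}^{k}(M)$, so the kernel is $L^{p}$-closed.

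There is essentially no serious obstacle: the entire proof is a one-line application of the distributional definition together with Hölder. The only point deserving mild attention is that the closedness is claimed in the $L^{p}$-norm, not in the graph norm of $\Omega_{p,r}^{k}(M)$; the key ingredient that makes the weaker $L^{p}$-convergence sufficient is precisely the compactness of support of the test forms, which keeps $d\eta$ in every Lebesgue class and lets one pair it against an $L^{p}$-limit.
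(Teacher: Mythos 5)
Your proof is correct, and it takes a genuinely different route for the closedness assertion. You verify closedness directly from the distributional definition: take a sequence $(\omega_n)\subset Z_{p,r}^k(M)$ with $\omega_n\to\omega$ in $L^p$, pair against a test form $\eta\in\mathcal D^{n-k-1}(M)$, use H\"older to pass to the limit in $\int_M\omega_n\wedge d\eta=0$, and conclude $d\omega=0$ weakly. The paper instead argues abstractly: $Z_{p,r}^k(M)$ is the kernel of the bounded operator $d$ on the Banach space $\Omega_{p,r}^k(M)$, hence closed in the graph norm; since the graph norm of any $\alpha\in Z_{p,r}^k(M)$ coincides with its $L^p$-norm (because $d\alpha=0$), an $L^p$-Cauchy sequence in $Z_{p,r}^k(M)$ is $\Omega_{p,r}^k$-Cauchy and the closedness transfers to $L^p$. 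Your version is more elementary and self-contained, in that it avoids invoking the completeness of $\Omega_{p,r}^k(M)$, which the paper only asserts by reference to ``standard arguments''; the paper's version is slicker once the functional-analytic framework is in place. You also spell out the $r$-independence (the zero form lies in every $L^r$, so the integrability constraint on $d\omega$ is vacuous once $d\omega=0$), which the paper treats as implicit, and you implicitly correct the degree typo in the paper's definition of weak exterior derivative, where the test form must range over $\mathcal D^{n-k-1}(M)$ rather than $\mathcal D^{n-k}(M)$.
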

\begin{proof}
The lemma is in fact \cite[Lemma~2.4~(i)]{GT2012}. However, we now
repeat the proof for the reader's convenience. Note that $Z_{p,r}^{k}(M)$
is a closed subspace in $\Omega_{p,r}^{k}(M)$ because it is the kernel
of the bounded operator $d$. It is also a closed subspace of $L^{p}(M,\Lambda^{k})$
since $\|\alpha\|_{\Omega_{p,r}^{k}(M)}=\|\alpha\|_{L^{p}(M,\Lambda^{k})}$
for any $\alpha\in Z_{p,r}^{k}(M)$. 
\end{proof}
This allows us to use the notation $Z_{p}^{k}(M)$ for all $Z_{p,r}^{k}(M)$.
Note that $Z_{p}^{k}(M)\subset L^{p}(M,\Lambda^{k})$ is always a
closed subspace but this is in general not true for $B_{q,p}^{k}(M)$.
Denote by $\overline{B}_{q,p}^{k}(M)$ its closure in the $L^{p}$-topology.
Observe also that since $d\circ d=0$, one has $\overline{B}_{q,p}^{k}(M)\subset Z_{p}^{k}(M)$.
Thus, 
\[
B_{q,p}^{k}(M)\subset\overline{B}_{q,p}^{k}(M)\subset Z_{p}^{k}(M)=\overline{Z}_{p}^{k}(M)\subset L^{p}(M,\Lambda^{k}).
\]

\begin{defn}
Suppose that $1\leq p,q\leq\infty$. The \emph{$L_{q,p}$-cohomology}
of $(M,g)$ is defined as the quotient 
\[
H_{q,p}^{k}(M):=Z_{p}^{k}(M)/B_{q,p}^{k}(M)\,,
\]
 and the \emph{reduced $L_{q,p}$-cohomology} of $(M,g)$ is, by definition,
the space 
\[
\overline{H}_{q,p}^{k}(M):=Z_{p}^{k}(M)/\overline{B}_{q,p}^{k}(M)\,.
\]

\end{defn}

Since $B_{p,q}^{k}$ is not always closed, the $L_{q,p}$-cohomology
is in general a (non-Haus\-dorff) semi-normed space, while the reduced
$L_{q,p}$-cohomology is a Banach space. Considering only the forms
equal to zero on some neighborhood (depending on the form) of a subset
$A\subset M$ and taking closures in the corresponding spaces, we
obtain the definition of the relative spaces $L^{p}(M,A,\Lambda^{k})$
and $\Omega_{q,p}(M,A)$ and the \emph{relative nonreduced} and \emph{reduced
cohomology spaces} $H_{q,p}^{k}(M,A)$ and $\overline{H}_{q,p}^{k}(M,A)$.

Similarly, one can define the $L_{q,p}$-cohomology with compact support (interior cohomology)  
$H_{q,p;0}^{k}(M,A)$ and $\overline{H}_{q,p;0}^{k}(M,A)$. The interior 
reduced cohomology is dual to the reduced cohomology:

\begin{thm}\label{thm:duality} \cite{GT2012}
Let $(M,g)$ be an oriented $n$-dimensional Riemannian manifold. If $1<p,q<\infty$ then 
 $\overline{H}_{q,p}^{k}(M)$ is isomorphic to the dual of $\overline{H}^{n-k}_{p',q';0}(M)$,
where $\frac{1}{p'}+\frac{1}{p}=\frac{1}{q'}+\frac{1}{q}=1$. 
\end{thm}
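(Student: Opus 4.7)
The plan is to build the duality from the wedge product pairing
\[
\langle \alpha , \beta \rangle := \int_M \alpha \wedge \beta,
\]
which by Hölder's inequality is continuous on $L^p(M,\Lambda^k) \times L^{p'}(M,\Lambda^{n-k})$, and to verify that it induces an isomorphism between $\overline{H}_{q,p}^{k}(M)$ and the dual of $\overline{H}^{n-k}_{p',q';0}(M)$.

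First I~would check that the pairing descends to the reduced quotients. Take $\alpha = d\gamma$ with $\gamma \in \Omega_{q,p}^{k-1}(M)$ and $\beta \in Z^{n-k}_{p',q';0}(M)$. By definition of the interior space, there exist $\beta_n \in \mathcal{D}^{n-k}(M)$ with $\beta_n \to \beta$ in $L^{p'}$ and $d\beta_n \to d\beta = 0$ in $L^{q'}$. Applying the definition of the weak exterior derivative to $\gamma$ with the compactly supported test form $\beta_n$ gives $\int_M d\gamma \wedge \beta_n = (-1)^k \int_M \gamma \wedge d\beta_n$; Hölder's inequality on both sides lets me pass to the limit and conclude $\int_M d\gamma \wedge \beta = 0$. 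The symmetric computation handles the case of $\beta$ exact. Continuity of $\langle\cdot,\cdot\rangle$ then extends the vanishing to $\overline{B_{q,p}^k(M)}$ and $\overline{B^{n-k}_{p',q';0}(M)}$, so the pairing induces a~continuous map
\[
\Phi : \overline{H}_{q,p}^{k}(M) \longrightarrow \bigl(\overline{H}^{n-k}_{p',q';0}(M)\bigr)^{*}.
\]

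To prove that $\Phi$ is an isomorphism I~would use Hahn-Banach and reflexivity. Since the $\wedge$-pairing realizes $L^p$-$L^{p'}$ duality on forms and $\overline{H}_{q,p}^{k}(M) = Z_p^k(M)/\overline{B_{q,p}^k(M)}$ is a~quotient of nested closed subspaces of the reflexive space $L^p(M,\Lambda^k)$, the standard quotient formula yields
\[
\overline{H}_{q,p}^{k}(M)^{*} \cong \overline{B_{q,p}^k(M)}^{\perp} \big/ Z_p^k(M)^{\perp},
\]
with annihilators taken in $L^{p'}(M,\Lambda^{n-k})$. The theorem then reduces to the two identifications
\[
\overline{B_{q,p}^k(M)}^{\perp} = Z^{n-k}_{p',q';0}(M), \qquad Z_p^k(M)^{\perp} = \overline{B^{n-k}_{p',q';0}(M)},
\]
whose inclusions $\supset$ are precisely the integration-by-parts computation of the previous paragraph; the stated form of the theorem follows by taking one further dual and using reflexivity of the reduced cohomology Banach spaces.

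The main obstacle is the reverse inclusions. Given $\beta \in L^{p'}$ annihilating every $d\gamma$ with $\gamma \in \Omega_{q,p}^{k-1}(M)$, testing against $\mathcal{D}^{k-1}(M)$ alone shows $d\beta = 0$ in the distributional sense, so $\beta \in Z_{p'}^{n-k}(M)$; the delicate step is upgrading this to approximability by smooth compactly supported closed forms in the graph norm, i.e.\ membership in $Z^{n-k}_{p',q';0}(M)$. I~would attack this via a~compact exhaustion $K_1 \subset K_2 \subset \cdots$ of $M$ with smooth cut-offs $\chi_j$, set $\beta_j = \chi_j \beta$ and correct the cut-off error $d\beta_j = d\chi_j \wedge \beta$ on the annular collar $K_{j+1}\setminus K_j$ by subtracting an~exact form produced via a~Poincaré-type solvability estimate in $L^{p'}$; the hypothesis that $\beta$ annihilates $\overline{B_{q,p}^k(M)}$ is precisely what forces these corrections to converge. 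The second identification is symmetric, with the roles of $(p,q)$ and $(p',q')$ swapped, and together the two give the duality.
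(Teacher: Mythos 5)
The paper does not actually prove this theorem; it cites it from \cite{GT2012}, so there is no internal proof to compare against. That said, your high-level strategy---set up the wedge pairing, reduce to a Banach quotient duality inside $L^p(M,\Lambda^k)$, and identify the two annihilators---is precisely the strategy of \cite{GT2012}, and your descent computation and the identification
\[
\overline{H}_{q,p}^{k}(M)^{*}\cong \overline{B_{q,p}^k(M)}^{\perp}\big/ Z_p^k(M)^{\perp}
\]
are correct.

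The genuine gap is in the reverse annihilator inclusions, which you yourself flag as ``the main obstacle'' and then only sketch. The cut-off-and-Poincar\'e-correction argument you outline is not adequate: on a general Riemannian manifold there is no reason for the $L^{p'}$ solvability estimate on the collars $K_{j+1}\setminus K_j$ to hold with uniform constants, and the assertion that the hypothesis ``$\beta$ annihilates $\overline{B_{q,p}^k(M)}$'' forces the corrections to converge is not substantiated. Since these two inclusions are exactly where the content of the theorem lives, the proposal does not constitute a proof. For the identification $Z_p^k(M)^{\perp}=\overline{B^{n-k}_{p',q';0}(M)}$, no cut-offs are needed: one first checks the easy dual statement $Z_p^k(M)=\bigl(\overline{B^{n-k}_{p',q';0}(M)}\bigr)^{\perp}$ (this is merely the statement that $\alpha\in L^p$ is weakly closed iff $\int_M\alpha\wedge d\delta=0$ for all $\delta\in\mathcal{D}^{n-k-1}(M)$), and then reads off the desired equality from the bipolar theorem in the reflexive space $L^p$. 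The remaining identification $\overline{B_{q,p}^k(M)}^{\perp}=Z^{n-k}_{p',q';0}(M)$ is the delicate one and requires a precise unwinding of what $Z^{n-k}_{p',q';0}$ is (the kernel of $d$ on the graph-norm closure of $\mathcal{D}^{n-k}$, which is \emph{not} obviously the $L^{p'}$-closure of compactly supported closed forms, as your phrasing suggests); this step is where an actual argument, rather than a sketch, is required. Finally, you should double-check the exponent bookkeeping against the paper's conventions: the annihilators produced by the $L^p$--$L^{p'}$ pairing live in $L^{p'}(M,\Lambda^{n-k})$, while in this paper's notation the cocycles of $\overline{H}^{n-k}_{p',q';0}$ are $q'$-integrable, so either you are implicitly using the opposite subscript convention or an extra identification is needed to land on the quotient displayed in the theorem.
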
 

Therefore, if $M$ is complete then $\overline{H}^k_{p}(M) = \overline{H}^k_{p;0}(M)$ 
and $\overline{H}^k_{p}(M)$ is isomorphic to dual of $\overline{H}^{n-k}_{p'}(M)$,
where $\frac{1}{p}+ \frac{1}{p'}  =  1$~\cite{GT2012}.

Below $|X|$ stands for the volume of a Riemannian manifold $(X,g)$.

\section{$L_{q,p}$-Cohomology and Smooth Forms}

It follows from the results of~\cite{GT2006} that, under suitable
assumptions on $p,q$, the $L_{q,p}$-cohomology of a Riemannian manifold
can be expressed in terms of smooth forms.

Introduce the notations: 
\begin{gather*}
C^{\infty}L^{p}(M,\Lambda^{k}):=C^{\infty}(M,\Lambda^{k})\cap L^{p}(M,\Lambda^{k});\\
C^{\infty}\Omega_{q,p}^{k}(M):=C^{\infty}(M,\Lambda^{k})\cap\Omega_{q,p}^{k}(M);\\
\end{gather*}

\begin{thm}
\label{sm-cohom} \emph{\cite[Theorem~12.8 and Corollary~12.9]{GT2006}.}
Let $(M,g)$ be an $m$-dimensional Riemannian manifold and suppose
that $p,q\in(1,\infty)$ satisfy $\frac{1}{p}-\frac{1}{q}\leq\frac{1}{m}$.
Then the cohomology $H_{q,p}^{*}(M)$ can be represented by smooth
forms.

More precisely, any closed form in $Z_{p}^{k}(M)$ is cohomologous
to a smooth form in $L^{p}(M)$. Furthermore, if two smooth closed
forms $\alpha,\beta\in C^{\infty}(M)\cap Z_{p}^{k}(M)$ are cohomologous
modulo $d\Omega_{q,p}^{k-1}(M)$ then they are cohomologous modulo
$dC^{\infty}\Omega_{q,p}^{k-1}(M)$.

Similarly, any reduced cohomology class can be represented by a smooth
form. 
\end{thm}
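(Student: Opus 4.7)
My plan is to prove the theorem by constructing a smooth regularization of the de Rham complex of $L^p$-forms together with a controlled chain homotopy. Specifically, I would build bounded linear operators
\[
R: L^p(M,\Lambda^k) \to C^\infty(M,\Lambda^k) \cap L^p(M,\Lambda^k),
\qquad
A: L^p(M,\Lambda^k) \to L^q(M,\Lambda^{k-1}),
\]
with $R$ commuting with $d$ and satisfying the homotopy identity
$R - \mathrm{Id} = dA + Ad$ in the distributional sense. The assumption $\frac{1}{p}-\frac{1}{q}\leq\frac{1}{m}$ enters precisely in bounding $A$: the local model for $A$ is the Poincar\'e cone homotopy on a ball in $\mathbb{R}^{m}$, which gains one derivative and therefore maps $L^p$ into $L^q$ under exactly this condition by Sobolev embedding.

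For the construction I would use a partition of unity $\{\chi_i\}$ subordinate to a locally finite cover of $M$ by small geodesic balls $\{U_i\}$. On each $U_i$, take a smooth mollifier $M_i$ (by convolution in a coordinate chart) equipped with its Poincar\'e cone homotopy $h_i$ satisfying $M_i - \mathrm{Id} = dh_i + h_i d$ on $U_i$. Assemble global operators as $R = \sum_i \chi_i M_i$ and $A = \sum_i \chi_i h_i$, adding a correction term to absorb the commutators $[d,\chi_i]$ so that the homotopy identity holds on $M$. The local Sobolev--Poincar\'e inequality yields $\|h_i \omega\|_{L^q(U_i)} \leq C \|\omega\|_{L^p(U_i)}$, and summation over $i$ delivers the desired global bound on $A$.

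Once $R$ and $A$ are in place, the conclusions follow quickly. For $\omega \in Z_p^k(M)$, the form $R\omega$ is smooth, lies in $L^p$, and $R\omega - \omega = dA\omega + A d\omega = dA\omega$ with $A\omega \in L^q$, so $R\omega$ represents the same $L_{q,p}$-cohomology class. For the second statement, given smooth closed $\alpha,\beta \in L^p$ with $\alpha - \beta = d\eta$ and $\eta \in \Omega_{q,p}^{k-1}(M)$, the form $R\eta$ is smooth and $dR\eta = R(\alpha-\beta) = (\alpha-\beta) + dA(\alpha-\beta)$; since $\alpha-\beta$ is smooth, one iterates the smoothing on the piece $A(\alpha-\beta)$ to exchange it for a smooth element of $\Omega_{q,p}^{k-1}$ modulo $d$ of a smooth form, producing the required smooth primitive $\tilde\eta \in C^\infty\Omega_{q,p}^{k-1}(M)$ with $d\tilde\eta = \alpha-\beta$. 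The reduced-cohomology version follows by passing to closures in the $L^p$-topology, since $R$ and $A$ are $L^p$-bounded.

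The main obstacle is the construction of $R$ and $A$ with the correct global estimates. The local Poincar\'e--Sobolev bound is classical, but gluing the local pieces without losing uniform control of the $L^p \to L^q$ norm of $A$ requires careful handling of the covering geometry and of the commutator corrections; this is the technical heart of Theorem~12.8 of~\cite{GT2006}.
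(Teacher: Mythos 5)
The paper does not prove this theorem; it is quoted from Gol$'$dshtein and Troyanov~\cite{GT2006} without argument, so there is no in-paper proof to compare against. Your reconstruction nevertheless describes the standard route to such regularization results and is consistent with the strategy of the cited source: build a smoothing operator $R$ and a chain homotopy $A$ satisfying $R - \mathrm{Id} = dA + Ad$, with the local model for $A$ being the Poincar\'e cone homotopy, which gains one derivative and hence maps $L^p$ into $L^q$ on a bounded chart precisely when $\frac{1}{p} - \frac{1}{q} \leq \frac{1}{m}$ via the Sobolev--Poincar\'e inequality, and then glue along a locally finite cover with uniformly controlled geometry.

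Two points in your sketch deserve tightening. First, $R = \sum_i \chi_i M_i$ does not commute with $d$, since $d(\chi_i M_i\omega) = \chi_i M_i d\omega + d\chi_i\wedge M_i\omega$ and the commutator $[d,\chi_i]$ does not vanish; you gesture at a ``correction term,'' but it is cleaner (and is what the de Rham--type regularization does) to build $R$ as an iterated composition of chart-local smoothings, each of which genuinely commutes with $d$, accumulating the homotopy $A$ telescopically along the way, so that the identity $R-\mathrm{Id}=dA+Ad$ holds with no residual commutator to absorb. Second, the iteration you invoke to obtain a smooth primitive from $R\eta$ is unnecessary once one observes that $A$, being built from integral (cone-homotopy) operators, carries smooth forms to smooth forms; since $\alpha-\beta$ is smooth, $A(\alpha-\beta)$ is smooth, and $\tilde\eta := R\eta - A(\alpha-\beta)$ is already a smooth element of $\Omega_{q,p}^{k-1}(M)$ with $d\tilde\eta = \alpha-\beta$ (membership in $\Omega_{q,p}^{k-1}$ uses that $R$ is $L^q$-bounded and $A$ maps $L^p$ to $L^q$). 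With these adjustments your sketch captures the proof the paper is referring to.
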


\medskip{}
In what follows, unless otherwise specified, we always assume that
$p,q\in(1,\infty)$ and $\frac{1}{p}-\frac{1}{q}\leq\frac{1}{\dim M}$.

\section{The Homotopy Operator}

From now on, $C_{a,b}^{h}N$ is the twisted cylinder $I\times_{h}N$,
that is, the product of a half-interval $I:=[a,b)$ and a~closed
smooth $n$-dimensional Riemannian manifold $(N,g_{N})$ equipped
with the Riemannian metric $dt^{2}+h^{2}(t,x)g_{N}$, where $h:I\times N\to\mathbb{R}$
is a~smooth positive function.

Every differential form on~$I\times N$ admits a unique representation
of the form $\omega=\omega_{A}+dt\wedge\omega_{B}$, where the forms
$\omega_{A}$ and $\omega_{B}$ do not contain $dt$ (cf. \cite{GKSh90_1}).
It means that $\omega_{A}$ and $\omega_{B}$ can be viewed as one-parameter
families $\omega_{A}(t)$ and $\omega_{B}(t)$, $t\in I$, of differential
forms on $N$. Given a form $\omega$ defined on $I\times N$ and
numbers $c,t\in[a,b)$, consider the form $\int_{c}^{t}\omega=\int_{c}^{t}\omega_{B}(\tau)d\tau$
on~$N$ and the form $S_{c}\omega$ on $[a,b)\times N$, $(S_{c}\omega)(t)=\int_{c}^{t}\omega$
for all $t\in[a,b)$. The domains of of these operators will be specified
below.

The modulus of a form $\omega$ of degree $k$ on $C_{a,b}^{h}N$
is expressed via the moduli of $\omega_{A}(t)$ and $\omega_{B}(t)$
on $N$ as follows: 
\begin{equation}
|\omega(t,x)|_{M}=\bigl[h^{-2k}(t,x)|\omega_{A}(t,x)|_{N}^{2}+h^{-2(k+1)}(t,x)|\omega_{B}(t,x)|_{X}^{2}\bigr]^{1/2}\label{eq:modulus}
\end{equation}

Consequently, 
\begin{multline}\label{eq:norm}
\|\omega\|_{L^p(C_{a,b}^h N,\Lambda^k)} \\
=\left[\!\int_{a}^{b}\int_{N}\bigl(h^{2(\frac{n}{p}-k)}(t,x) |\omega_{A}(t,x)|_{N}^{2}\!
+ h^{2(\frac{n}{p}-k+1)}(t,x) |\omega_{B}(t,x)|_{N}^{2}\bigr)^{\frac{p}{2}} 
dxdt\!\right]^{\frac{1}{p}}.
\end{multline}

In the particular case of $\omega=\omega_{A}$, call the form $\omega$
\emph{horizontal}. If $\omega$ is a horizontal form then 
\begin{equation}\label{eq:shortnorm}
\|\omega\|_{L^p(C_{a,b}^h N,\Lambda^k)}
=\left[\int_{a}^{b}\int_{N} |\omega(t)|^p h^{n-kp}(t,x) dxdt\right]^{1/p}.
\end{equation}

Put 
\[
f_{k,p}(t)=\min_{x\in N}\bigl\{ h^{\frac{n}{p}-k}(t,x)\bigr\}
\]
 and 
\[
F_{k,p}(t)=\max_{x\in N}\bigl\{ h^{\frac{n}{p}-k}(t,x)\bigr\}.
\]

\begin{rem}
\label{rem: absolute}
(1) Suppose that $k=\frac{n}{p}$ is an integer.
Then $F_{n/p,p}(t)=f_{n/p,p}(t)\equiv 1$. For example, if $n$ is even
and $p=2$ then $F_{n/2,2}(t)=f_{n/2,2}(t)\equiv 1$.

(2) For warped products ($h$ depends only on~$x$), $f_{k,p}(t)=F_{k,p}(t)=h^{\frac{n}{p}-k}(t)$.
\end{rem}
Let 
\[
\pi:I\times N\to N,\quad\pi(t,x)=x
\]
 be the natural projection. For $c\in[a,b)$, put $N_{c}:=\{c\}\times N$.
Let $i_{t}:N\to N_{t}\subset[a,b)\times N$ and $i_{c}:N\to N_{c}\subset[a,b)\times N$
be the natural immersions.

Every smooth $k$-form $\omega$ on $C_{a,b}^{h}N$ satisfies the
homotopy relations 
\begin{gather}
d_{N}\left(\int_{c}^{t}\omega\right)+\int_{c}^{t}d\omega=i{}_{t}^{*}\omega-i_{c}^{*}\omega,\label{eq:homN}\\
dS_{c}\omega+S_{c}d\omega=\omega-\pi^{*}i_{c}^{*}\omega.\label{eq:homM}
\end{gather}

Here $d_{N}$ stands for the exterior derivative on $N$ and $d$
designates the exterior derivative on $[a,b)\times N$.

The homotopy relations cannot be used automatically for $\omega\in\Omega_{q,p}^{k}(C_{a,b}^{h}N)$
because of the problem of the~existence of traces on submanifolds.
However, by Theorem~\ref{sm-cohom}, we can take only smooth
forms in all considerations concerning both reduced and nonreduced
$L_{q,p}$-cohomology.

For the reader's convenience, we repeat the classical proofs of~(\ref{eq:homN})
and~(\ref{eq:homM}).

Using the representation $\omega=\omega_{A}+dt\wedge\omega_{B}$,
we have 
\begin{gather*}
d\omega=d(\omega_{A}(t)+dt\wedge\omega_{B}(t))=dt\wedge\frac{\partial\omega_{A}}{\partial t}(t)+d_{N}\omega_{A}(t)-dt\wedge d_{N}\omega_{B}(t),\\
d_{N}\left(\int_{c}^{t}\omega\right)=d_{N}\left(\int_{c}^{t}\omega_{B}(\tau)d\tau\right)=\int_{c}^{t}d_{N}\omega_{B}(\tau)d\tau,\\
\int_{c}^{t}d\omega=\int_{c}^{t}\left(d\omega\right)_{B}(\tau)d\tau=\int_{c}^{t}\left(\frac{\partial\omega_{A}}{\partial t}(\tau)\right)d\tau-\int_{c}^{t}d_{N}\omega_{B}(\tau)d\tau.
\end{gather*}

Hence, 
\[
d_{N}\left(\int_{c}^{t}\omega\right)+\int_{c}^{t}d\omega=\int_{c}^{t}\left(\frac{\partial\omega_{A}}{\partial t}(\tau)\right)d\tau=\omega_{A}(t)-\omega_{A}(c)=i{}_{t}^{*}\omega-i_{c}^{*}\omega.
\]
 Similarly, 
\begin{gather*}
S_{c}\omega=\int_{c}^{t}\omega_{B}(\tau)d\tau;\quad dS_{c}\omega=d\left(\int_{c}^{t}\omega_{B}(\tau)d\tau\right)=dt\wedge\omega_{B}(t)+\int_{c}^{t}d_{N}\omega_{B}(\tau)d\tau,\\
S_{c}d\omega=\int_{c}^{t}d\omega=\int_{c}^{t}\left(\frac{\partial\omega_{A}}{\partial t}(\tau)\right)d\tau-\int_{c}^{t}d_{N}\omega_{B}(\tau)d\tau.
\end{gather*}

Therefore, 
\[
dS_{c}\omega+S_{c}d\omega=dt\wedge\omega_{B}(t)+\int_{c}^{t}\left(\frac{\partial\omega_{A}}{\partial t}(\tau)\right)d\tau=dt\wedge\omega_{B}(t)+\omega_{A}(t)-\omega_{c}(t)=\omega-i_{c}^{*}\omega.
\]

\begin{lem}
\label{lem:asymp} 
Suppose that $g$ is a function locally integrable
on $[a,b)$, \linebreak $\int_{a}^{b}|g(t)|dt=\infty$, and $\omega\in C^{\infty}L_{p}^{k}(C_{a,b}^{h}N)$.
Then every set of full measure on $[a,b)$ contains a sequence $\{t_{j}\}$
that converges to $b$ and is such that $g(t_{j})\ne0$ and $\|i_{t_{j}}^{*}\omega\|_{L^{p}(N,\Lambda^{k})}=o([f_{k,p}(t_{j})]^{-1}|g(t_{j})|^{1/p})$
as $j\to\infty$. \end{lem}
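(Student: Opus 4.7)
The plan is to reduce the lemma to an $L^1$-integrability assertion together with an elementary measure-theoretic extraction argument. First I observe that $i_t^{*}\omega=\omega_A(t,\cdot)$ as a $k$-form on $N$ (with metric $g_N$), so raising the desired estimate $\|i_{t_j}^{*}\omega\|_{L^p(N,\Lambda^k)}=o([f_{k,p}(t_j)]^{-1}|g(t_j)|^{1/p})$ to the $p$th power, it becomes $\phi(t_j)/|g(t_j)|\to 0$, where
\[
\phi(t):=f_{k,p}(t)^{p}\,\|i_t^{*}\omega\|_{L^p(N,\Lambda^k)}^{p}=f_{k,p}(t)^{p}\int_N|\omega_A(t,x)|_N^{p}\,dx.
\]

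Next I would show $\phi\in L^{1}([a,b))$. From formula (\ref{eq:norm}) and the trivial inequality $(X+Y)^{p/2}\ge X^{p/2}$ (valid for $X,Y\ge 0$ and any $p>0$), I get
\[
\|\omega\|_{L^p(C_{a,b}^{h}N,\Lambda^k)}^{p}\;\ge\;\int_{a}^{b}\!\int_N h^{\,n-kp}(t,x)\,|\omega_A(t,x)|_N^{p}\,dx\,dt.
\]
Because $h^{\,n-kp}(t,x)=(h^{n/p-k}(t,x))^{p}\ge f_{k,p}(t)^{p}$ uniformly in $x\in N$, pulling this factor out of the $N$-integral yields $\int_{a}^{b}\phi(t)\,dt\le\|\omega\|_{L^p}^{p}<\infty$. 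Smoothness of $\omega$ and continuity of $h$ on compact $N$ ensure that $\phi$ is measurable (indeed continuous).

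Finally, for a given full-measure set $E\subset[a,b)$ I argue by contradiction. If no sequence $t_j\in E$ with $t_j\to b$, $g(t_j)\ne 0$, and $\phi(t_j)/|g(t_j)|\to 0$ exists, then one can find $\varepsilon>0$ and $t^{*}\in[a,b)$ such that $\phi(t)\ge\varepsilon\,|g(t)|$ for a.e. $t\in E\cap[t^{*},b)$ with $g(t)\ne 0$. Integrating over this set and using that $E^{c}$ is null and that $\{g=0\}$ contributes nothing to $\int|g|$, I obtain
\[
\infty>\int_{t^{*}}^{b}\phi(t)\,dt\;\ge\;\varepsilon\int_{t^{*}}^{b}|g(t)|\,dt=\infty,
\]
where the last equality uses local integrability of $g$ together with $\int_{a}^{b}|g|=\infty$. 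This contradiction furnishes the required sequence. I anticipate no genuine obstacle: the only substantive step is the integrability of $\phi$ extracted from (\ref{eq:norm}), after which the sequential selection is a routine contrapositive argument.
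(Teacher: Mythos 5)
Your proof is correct and follows essentially the same line as the paper's: establish that $\phi(t):=f_{k,p}^{p}(t)\,\|i_t^{*}\omega\|_{L^p(N,\Lambda^k)}^{p}$ is integrable on $[a,b)$ via the pointwise bound $h^{\,n-kp}(t,x)\ge f_{k,p}^{p}(t)$ together with \eqref{eq:norm}, and then extract the sequence by a measure-theoretic argument. The paper records only the integrability inequality and then says ``which yields the lemma''; you have supplied the omitted contrapositive extraction explicitly, which is a legitimate and standard way to finish. (Minor note: once you pass to the negation, the inequality $\phi(t)\ge\varepsilon|g(t)|$ actually holds for \emph{all} $t\in E\cap\{g\ne0\}\cap[t^{*},b)$, not merely a.e.\ $t$; this does not affect the conclusion.)
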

\begin{proof}
Equality (\ref{eq:shortnorm}) implies: 
\[
\int_{a}^{b}\|i_{t}^{*}\omega\|_{L^{p}(N,\Lambda^{k})}^{p}f_{k,p}^{p}(t)dt\leq\int_{a}^{b}\int_{N}|\omega_{A}(t)|^{p}\, h^{n-kp}(t,x)dxdt\le\|\omega\|_{L^{p}(C_{a,b}^{h}N,\Lambda^{k})}^{p}<\infty,
\]
 which yields the lemma. \end{proof}
\begin{lem}
\label{ineq} Suppose that $\omega\in L_{p}^{k}(C_{a,b}^{h}N)$, $c\in[a,b)$,
$p\ge q>1$. Then the form $\int_{c}^{t}\omega$ is defined for each
$t\in[a,b)$ and belongs to $L^{q}(N,\Lambda^{k-1})$. Moreover, 
\[
\left\Vert \int_{c}^{t}\right\Vert \le|N|^{\frac{1}{q}-\frac{1}{p}}\left|\int_{c}^{t}f_{k-1,p}^{-p'}(\tau)d\tau\right|^{1/p'}.
\]
 The same holds for $c=b$ if 
\[
\int_{a}^{b}f_{k-1,p}^{-p'}(\tau)d\tau<\infty.
\]
 \end{lem}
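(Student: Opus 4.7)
The plan is to control the operator $\omega \mapsto \int_c^t \omega$ by two applications of Hölder's inequality: first in the variable $\tau \in [c,t]$ to absorb the weight coming from the twisted metric into the function $f_{k-1,p}$, and second in $x \in N$ to convert an $L^p$-norm on the base into the desired $L^q$-norm, exploiting the compactness of $N$ together with the hypothesis $q \le p$.

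First, I write $\int_c^t \omega = \int_c^t \omega_B(\tau)\,d\tau$, which is a $(k-1)$-form on $N$, and I bound it pointwise in $x\in N$ by
\[
\left|\int_c^t \omega_B(\tau,x)\,d\tau\right|_N \le \int_c^t |\omega_B(\tau,x)|_N\,d\tau.
\]
Next, I apply Hölder's inequality in $\tau$ with exponents $(p,p')$, splitting the integrand as $|\omega_B|\,h^{n/p-(k-1)} \cdot h^{-(n/p-(k-1))}$:
\[
\int_c^t |\omega_B|_N\,d\tau \le \left(\int_c^t |\omega_B|_N^p\, h^{n-(k-1)p}\,d\tau\right)^{1/p}\!\left(\int_c^t h^{-(n/p-(k-1))p'}(\tau,x)\,d\tau\right)^{1/p'}.
\]
By definition of $f_{k-1,p}$ one has $h^{-(n/p-(k-1))}(\tau,x) \le f_{k-1,p}^{-1}(\tau)$, so the second factor is at most $\left|\int_c^t f_{k-1,p}^{-p'}(\tau)\,d\tau\right|^{1/p'}$, uniformly in $x$.

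Then I raise the resulting pointwise bound to the $q$-th power, integrate over $N$, and apply Hölder in $x\in N$ with exponents $(p/q,(p/q)')$, which is legitimate since $p\ge q$; this produces the constant $|N|^{1-q/p}$ and reduces the $L^q$-norm of the first factor to an $L^p$-norm. The resulting double integral
\[
\int_N\int_c^t |\omega_B(\tau,x)|_N^p\,h^{n-(k-1)p}(\tau,x)\,d\tau\,dx
\]
is precisely the $\omega_B$-contribution to $\|\omega\|_{L^p(C_{a,b}^h N,\Lambda^k)}^p$ that appears in formula~(\ref{eq:norm}), so it is bounded by $\|\omega\|_{L^p}^p$. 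Reassembling the three factors and taking $q$-th roots yields the operator-norm bound claimed in the lemma. For $c=b$, the hypothesis $\int_a^b f_{k-1,p}^{-p'}<\infty$ together with the same estimate shows that the truncated integrals $\int_{c_j}^t\omega$ form a Cauchy sequence in $L^q(N,\Lambda^{k-1})$ as $c_j\nearrow b$, and the limit defines $\int_b^t\omega$.

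The main obstacle is purely bookkeeping: one has to match the specific weight $h^{n-(k-1)p}$ that the norm formula~(\ref{eq:norm}) attaches to $\omega_B$ with the exponent $-p'(n/p-(k-1))$ that falls out of Hölder's inequality, so that the residual factor is exactly $f_{k-1,p}^{-p'}$. No deeper analytic ingredient is required.
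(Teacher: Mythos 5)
Your proof is correct and follows essentially the same approach as the paper: two applications of Hölder's inequality with the weight split between $|\omega_B|$ and $h^{n/p-k+1}$, a pointwise bound $h^{-(n/p-k+1)}\le f_{k-1,p}^{-1}$, and identification of the resulting integral with the $\omega_B$-contribution to $\|\omega\|_{L^p}^p$. The only inessential difference is the order of the two Hölder steps (the paper first passes from $L^q(N)$ to $L^p(N)$ and then applies Hölder in $\tau$, while you do $\tau$ first and then $x$), and your treatment of the $c=b$ case via Cauchy sequences fills in a detail the paper's proof leaves implicit.
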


\begin{proof}
By H\"older's inequality, 
\begin{multline*}
\left\Vert \int_{c}^{t}\omega\right\Vert _{L^{q}(N,\Lambda^{k-1})}\le|N|^{\frac{1}{q}-\frac{1}{p}}\left\Vert \int_{c}^{t}\omega\right\Vert _{L^{p}(N,\Lambda^{k-1})}\\
=|N|^{\frac{1}{q}-\frac{1}{p}}\biggl(\int_{N}\biggl|\int_{c}^{t}\omega_{B}(\tau)d\tau\biggr|_{N}^{p}dx\biggr)^{1/p}\le|N|^{\frac{1}{q}-\frac{1}{p}}\biggl(\int_{N}\biggl|\int_{c}^{t}|\omega_{B}(\tau)|_{N}d\tau\biggr|^{p}dx\biggr)^{1/p}\\
\le|N|^{\frac{1}{q}-\frac{1}{p}}\Biggl[\int_{N}\biggl|\int_{c}^{t}h^{-(\frac{n}{p}-k+1)p'}(x,\tau)d\tau\int_{c}^{t}|\omega_B(\tau)|_N^{p}\, h^{n-kp+p}(\tau,x)d\tau\biggr|dx\Biggr]^{1/p}\\
\le|N|^{\frac{1}{q}-\frac{1}{p}}\Biggl[\int_{N}\biggl|\int_{c}^{t}f_{k-1,p}^{-p'}(\tau,x)d\tau\int_{c}^{t}|\omega_B(\tau)|_N^{p}\, h^{n-kp+p}(\tau,x)d\tau\biggr|dx\Biggr]^{1/p}\\
\le|N|{}^{\frac{1}{q}-\frac{1}{p}}\left|\int_{c}^{t}f_{k-1,p}^{-p'}(\tau)d\tau\right|\|\omega\|_{L^{p}(C_{a,b}^{h}N,\Lambda^{k})}.
\end{multline*}
 This proves the lemma. \end{proof}
\begin{rem}
The proof shows that Lemma~\ref{ineq} actually holds in the case where
$N$ is not necessarily compact but has finite volume. 
\end{rem}

\section{The Main Results}

\subsection{Absolute reduced $L_{q,p}$-cohomology}

Using the results of the previous section, we prove

\begin{thm}
\label{thm:abs} 
Let $N$ be a~closed smooth $n$-dimensional Riemannian manifold and let $p\ge q>1$. If 
$$
I_{a,b}:= \int_a^b F_{k,p}^p (t) dt = \infty;
\quad
J_{\delta_0, b}:=\int_{\delta_0}^b f_{k,p}^p(\tau)\left(\int_{a}^{\tau} F_{k,p}^p(t)dt \right)^{-1}d\tau=\infty
$$
for some $\delta_0\in [a,b)$ then $\overline{H}_{q,p}^{k}(C_{a,b}^h N)=0$. 
\end{thm}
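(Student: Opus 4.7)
The plan is to use Theorem~\ref{sm-cohom} to reduce to smooth forms, and then, for a smooth closed $\omega\in Z_p^k(C_{a,b}^h N)$, to exhibit primitives $\eta_j\in\Omega_{q,p}^{k-1}$ with $d\eta_j\to\omega$ in $L^p$. These primitives would come from the homotopy operator $S_{t_j}\omega$ at a carefully chosen sequence $t_j\to b$, \emph{sharply} truncated to $[a,t_j]$.

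First, I would choose $t_j$ via Lemma~\ref{lem:asymp}. Setting $\Phi(t):=\int_a^t F_{k,p}^p(s)\,ds$, the hypothesis $I_{a,b}=\infty$ ensures $\Phi(t)\to\infty$ as $t\to b$, so $g(t):=f_{k,p}^p(t)/\Phi(t)$ is well defined on $(a,b)$ and $J_{\delta_0,b}=\infty$ amounts to $\int_{\delta_0}^{b}g\,dt=\infty$. Lemma~\ref{lem:asymp} would then produce $t_j\to b$ with
\[
\|i_{t_j}^{*}\omega\|_{L^p(N,\Lambda^k)}^p=o\!\bigl(g(t_j)/f_{k,p}^p(t_j)\bigr)=o\!\bigl(1/\Phi(t_j)\bigr),
\]
in particular $\Phi(t_j)\,\|i_{t_j}^{*}\omega\|_{L^p(N)}^p\to 0$.

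Next I would set $\eta_j(t,x):=\mathbf{1}_{[a,t_j]}(t)\,S_{t_j}\omega(t,x)$. The decisive observation is that $S_{t_j}\omega(t_j,\cdot)\equiv 0$, so the sharp cut-off contributes no boundary delta: Stokes' formula against a test form on $[a,t_j]\times N$, together with (\ref{eq:homM}) and $d\omega=0$, would give in the weak sense
\[
d\eta_j=\mathbf{1}_{[a,t_j]}(t)\,\bigl(\omega-\pi^{*}i_{t_j}^{*}\omega\bigr).
\]
Then $\omega-d\eta_j=\mathbf{1}_{(t_j,b)}(t)\,\omega+\mathbf{1}_{[a,t_j]}(t)\,\pi^{*}i_{t_j}^{*}\omega$ splits into two terms with disjoint $t$-support: the first tends to $0$ in $L^p$ by dominated convergence, and the second, via $h^{n-kp}(t,x)\le F_{k,p}^p(t)$, is bounded by $\Phi(t_j)\,\|i_{t_j}^{*}\omega\|_{L^p(N)}^p\to 0$. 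Finally, on the compact slab $[a,t_j]\times N$ the weights $f_{k-1,p}^{-p'}$ and $F_{k-1,q}^q$ are bounded, so Lemma~\ref{ineq} would give $\eta_j\in L^q$, hence $\eta_j\in\Omega_{q,p}^{k-1}$ and $\omega\in\overline{B}_{q,p}^k$.

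The step I expect to be the main obstacle is precisely the choice of cut-off in the primitive construction: a \emph{smooth} cut-off of $S_{t_j}\omega$ would generate an unwanted term $\phi_j'\,dt\wedge S_{t_j}\omega$ whose natural estimates require the degree-$(k-1)$ weights $F_{k-1,p}$ and $f_{k-1,p}$, not controlled by the theorem's hypotheses. The identity $S_{t_j}\omega(t_j,\cdot)\equiv 0$ is exactly what legitimises replacing a smooth cut-off by $\mathbf{1}_{[a,t_j]}$, so that only the hypotheses $I_{a,b}=\infty$ and $J_{\delta_0,b}=\infty$ on degree-$k$ weights need do any work.
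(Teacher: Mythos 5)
Your proposal is correct and coincides with the paper's argument in substance: your primitive $\eta_j=\mathbf{1}_{[a,t_j]}\,S_{t_j}\omega$ is precisely the paper's $S_{\tau_j}(\omega-\omega_j)$ (since $(\omega-\omega_j)_B=\omega_B$ on $[a,\tau_j)$ and both vanish for $t\ge\tau_j$), and $\omega-d\eta_j$ is precisely the paper's auxiliary cocycle $\omega_j$, defined piecewise as $\omega$ on $[\tau_j,b)$ and $\pi^{*}i_{\tau_j}^{*}\omega$ on $[a,\tau_j)$, with the same choice of $g$ in Lemma~\ref{lem:asymp} and the same two-term $L^p$ estimate. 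The only difference is presentational --- the paper introduces the modified closed form $\omega_j$ first and then exhibits its primitive, while you write the primitive directly --- and your remark that the sharp cut-off avoids a $\varphi'_j\,dt\wedge S_{t_j}\omega$ term and hence the degree-$(k-1)$ weights is a correct explanation of why the paper's construction uses only the degree-$k$ weights $F_{k,p}$ and $f_{k,p}$, even though the paper does not comment on this.
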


\begin{rem}
The condition ``$J_{\delta_0, b}=\infty$ {\em for some} $\delta_0\in (a,b)$'' is in~fact equivalent to 
``$J_{\delta_0, b}=\infty$ {\em for every} $\delta_0 \in (a,b)$''. In this connection, below we sometimes
write $J_b$ instead of~$J_{\delta_0, b}$. 
\end{rem}

\begin{proof}
Suppose that $\omega\in L_{p}^{k}(C_{a,b}^{h}N)$ is weakly differentiable
and $d\omega=0$. Therefore, $\omega\in\Omega_{p,p}^{k}(C_{a,b}^{h}N)$.
By Theorem~\ref{sm-cohom}, we may assume that $\omega$ is a smooth
form. If $\int_{a}^{\delta_0} f_{k,p}^{p}(\tau)\left(\int_{a}^{\tau}F_{k,p}^{p}(t)dt\right)^{-1}d\tau=\infty$
for $\delta_0\in (a,b)$ then the function $g(\tau)=f_{k,p}^{p}(\tau)\left(\int_{a}^{\tau}F_{k,p}^{p}(t)dt\right)^{-1}$
is not integrable on intervals of the form $(c,b)$, $\delta_0\le c<b$.
By Lemma~\ref{lem:asymp}, there exists a~sequence $\{\tau_{j}\}\subset(\delta_0,b)\subset (a,b)$
such that $\tau_{j}\to b$ and 
\[
\|i_{\tau_{j}}^{*}\omega\|_{L^{p}(N,\Lambda^{k})}=o\left(\left[\int_{a}^{\tau_{j}}F_{k,p}^{p}(t)dt\right]^{-1/p}\right)\quad\mbox{as \ensuremath{j\to\infty}}.
\]
 Consider the form 
\[
\omega_{j}=\begin{cases}
\omega & \text{on \ensuremath{[\tau_{j},b)},}\\
\pi^{*}i_{\tau_{j}}^{*}\omega & \text{on \ensuremath{[a,\tau_{j})}.}
\end{cases}
\]
 It is easy to verify that $\omega_{j}$ belongs to $\Omega_{q,p}^{k}(C_{a,b}^{h}N)$.
Since $d\omega=0$ and $d\pi i_{\tau_{j}}^{*}\omega=0$, we have $d\omega_{j}=0$.

We infer 
\[
\|\omega_{j}\|_{L^{p}(C_{a,b}^{h}N,\Lambda^{k})}^{p}\le\int_{a}^{\tau_{j}}F_{k,p}^{p}(t)dt\,\,\|i_{\tau_{j}}^{*}\omega\|_{L^{p}(N,\Lambda^{k})}^{p}+\|\omega\|_{L^{p}(C_{\tau_{j},b}^{h}N,\Lambda^{k})}^{p}.
\]
 Here $C_{\tau_j,b}^{h}N$ is the product manifold $[\tau_j,b)\times N$
endowed with the metric induced from~$C_{a,b}^{h}N$.

Therefore, $\|\omega_{j}\|_{L^{p}(C_{a,b}^{h}N,\Lambda^{k})}\to0$
as $j\to\infty$; moreover, the form $\omega-\omega_{j}$ is equal
to $0$ on $[\tau_{j},b)\times N$. Hence, $S_{\tau_{j}}(\omega-\omega_{j})\in\Omega_{q,p}(C_{a,b}^{h}N)$
and $dS_{\tau_{j}}(\omega-\omega_{j})=\omega-\omega_{j}$. Thus, the
cocycle $\omega$ is zero in the reduced cohomology $\overline{H}_{q,p}^{k}(C_{a,b}^{h}N)$. 
\end{proof}

From Remark \ref{rem: absolute} and Theorem~\ref{thm:abs} we obtain

\begin{thm}\label{cor:special-abs}
Let $N$ be a~closed smooth $n$-dimensional Riemannian manifold.
If $p\ge q>1$, $b=\infty$ and $\frac{n}{p}$
is an integer then \hbox{$\overline{H}_{q,p}^{\frac{n}{p}}(C_{a,\infty}^{h}N)=0$}. 

In particular, if $1<q\le 2$ and $n$ is even then
\hbox{$\overline{H}_{q,2}^{\frac{n}{2}}(C_{a,\infty}^{h}N)=0$}.
\end{thm}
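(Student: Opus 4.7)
The plan is to derive this as a direct corollary of Theorem~\ref{thm:abs}, exploiting the degenerate behavior of the extremal functions $F_{k,p}$ and $f_{k,p}$ at the critical degree $k = n/p$. I would set $k := n/p$ (an integer by hypothesis) and $b := \infty$; by Remark~\ref{rem: absolute}(1), the exponent $\frac{n}{p} - k$ vanishes identically, so $F_{k,p}(t) = f_{k,p}(t) = 1$ for every $t \in [a, \infty)$, independently of the twisting function~$h$. This trivializes the quantitative content of both hypotheses of Theorem~\ref{thm:abs}.

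Indeed, the first hypothesis reduces to
\[
I_{a, \infty} = \int_a^\infty 1\, dt = \infty,
\]
while for any $\delta_0 \in (a, \infty)$ we have $\int_a^\tau F_{k,p}^p(t)\, dt = \tau - a$, so the second becomes
\[
J_{\delta_0, \infty} = \int_{\delta_0}^\infty (\tau - a)^{-1}\, d\tau = \infty.
\]
Both divergence conditions hold unconditionally, so Theorem~\ref{thm:abs} delivers $\overline{H}_{q,p}^{n/p}(C_{a, \infty}^h N) = 0$. For the ``in particular'' clause, I would specialize to $p = 2$: when $n$ is even, $n/2$ is an integer; the assumption $1 < q \leq 2$ guarantees $p \geq q > 1$; and the general statement applied with $p = 2$ yields the claim.

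There is essentially no obstacle in the argument itself — all the genuine work is already hidden inside Theorem~\ref{thm:abs}. The only point worth emphasizing is conceptual: the critical degree $k = n/p$ is precisely the one at which the weight $h^{n - kp}$ in~(\ref{eq:shortnorm}) equals $1$, so the $L^p$-norm of a horizontal $k$-form becomes completely insensitive to the twisting. That is exactly what forces $F_{k,p}$ and $f_{k,p}$ to be identically $1$ and hence both hypotheses of Theorem~\ref{thm:abs} to be satisfied with no geometric assumption on~$h$ beyond $b = \infty$.
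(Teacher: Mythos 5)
Your proof is correct and follows exactly the paper's route: apply Remark~\ref{rem: absolute}(1) to see $F_{n/p,p}\equiv f_{n/p,p}\equiv 1$, observe that both $I_{a,\infty}$ and $J_{\delta_0,\infty}$ then diverge unconditionally, and invoke Theorem~\ref{thm:abs}. Your version just spells out the computations $I_{a,\infty}=\int_a^\infty 1\,dt$ and $J_{\delta_0,\infty}=\int_{\delta_0}^\infty(\tau-a)^{-1}\,d\tau$, which the paper leaves implicit.
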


\begin{proof}
In this case, $I_{a,\infty}=J_{a,\infty}\!=\!\infty$, and by Theorem~\ref{thm:abs} 
$\overline{H}_{q,p}^{\frac{n}{p}}(C_{a,\infty}^{h}N)\!=\!0$.
\end{proof}

\begin{rem}
For a~warped cylinder $C_{a,b}^h N$, we have 
$f_{k,p}^{p}(\tau)=F_{k,p}^{p}(\tau)=h^{n-kp}(\tau)$.
Therefore, if 
$$
\int_a^b h^{n-kp} (t) dt = \infty
$$
then 
\begin{multline*}
J_{\delta_0, b}=\int_{\delta_0}^b h^{n-kp}(\tau) \left(\int_{a}^{\tau} h^{n-kp}(t)\, dt \right)^{-1} d\tau
=\int_{\delta_0}^b  \frac{d}{d\tau} \log \left( \int_a^\tau h^{n-kp}(t) dt \right) d\tau
\\
=\lim_{\tau\nearrow b} \left\{ \log \left( \int_a^\tau h^{n-kp}(t) dt \right)
- \log \left( \int_a^{\delta_0} h^{n-kp}(t) dt \right) \right\} = \infty,
\end{multline*}
and the result of Theorem \ref{thm:abs} coincides with the corresponding
result in~\cite{GKSh90_1}. 
\end{rem}

\subsection{Relative reduced $L_{q,p}$-cohomology}

Here we prove a sufficient vanishing condition for $\overline{H}_{q,p}^{k}(C_{a,b}^{h}N,N_{a})$,
where $N_{a}=\{a\}\times N$.

\begin{thm}
\label{thm:rel} 
Let $N$ be a~closed smooth $n$-dimensional Riemannian manifold.
Assume that $p\ge q>1$, 
$$
\tilde{I}_{a,b}:= \int_a^b f_{k-1,p}^{-p'} (t)dt = \infty,
$$ 
and the integral
$$
A_{\delta_0,b}:= 
 \int_{\delta_0}^b \frac{F_{k-1,p}^p(\tau)}{ f_{k-1,p}^{pp'}(\tau) }
\biggl( \int_a^\tau f_{k-1,p}^{-p'}(t) dt\biggr)^{-1} 
\biggl|\log\biggl(\int_a^\tau f_{k-1,p}^{-p'}(t) \, dt\biggr)\biggr|^{-p} \, d\tau
$$
is finite for some $\delta_0\in [a,b)$. Then $\overline{H}_{q,p}^k(C_{a,b}^h N, N_a)=0$.
\end{thm}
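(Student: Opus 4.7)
The plan is to adapt the strategy of Theorem~\ref{thm:abs}, but with the homotopy anchored at the boundary face $N_a$ rather than at a moving slice $N_{\tau_j}$. A class in $\overline{H}_{q,p}^k(C_{a,b}^h N, N_a)$ is represented by a closed form $\omega$ vanishing on some neighborhood of $N_a$, and by a relative analogue of Theorem~\ref{sm-cohom} we may assume $\omega$ is smooth. Set $\eta := S_a \omega$. Since $d\omega = 0$ and $i_a^{*}\omega = 0$, the homotopy identity~\eqref{eq:homM} gives $d\eta = \omega$, and $\eta$ vanishes in the same neighborhood of $N_a$. If $\eta$ were in $L^q$ we would be done immediately, but the hypothesis $\tilde I_{a,b}=\infty$ makes the H\"older bound of Lemma~\ref{ineq} blow up near $b$, so $\eta$ need not be globally $L^q$-integrable. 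The remedy is a Royden-type cutoff in $t$.

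Introduce $\Phi(t) := \int_a^t f_{k-1,p}^{-p'}(s)\, ds$, so that $\Phi(b^-) = \infty$. Fix a smooth $\chi\colon \mathbb{R}\to [0,1]$ with $\chi \equiv 1$ on $(-\infty, 0]$ and $\chi \equiv 0$ on $[1,\infty)$, and define $\phi_j(t) := \chi(\log\log\Phi(t) - j)$, extended by $1$ on a neighborhood of $a$ where $\log\log\Phi$ is undefined. For each $j$ the support of $\phi_j$ is relatively compact in $[a,b)$, so $\phi_j\eta$ belongs to $\Omega_{q,p}^{k-1}(C_{a,b}^h N, N_a)$, and $\phi_j \to 1$ pointwise as $j\to \infty$. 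Using $d(\phi_j\eta) = \phi_j\omega + \phi_j'(t)\, dt\wedge \eta$, one obtains
\[
\omega - d(\phi_j \eta) = (1-\phi_j)\,\omega - \phi_j'(t)\, dt\wedge \eta.
\]
The first summand tends to $0$ in $L^p$ by dominated convergence. For the second, the H\"older step used in Lemma~\ref{ineq} applied on $[a,t]$ yields
\[
|\eta(t,x)|^p \le \Phi(t)^{p-1}\int_a^t |\omega_B(\tau,x)|^p\, h^{n-kp+p}(\tau,x)\, d\tau,
\]
and combining this with $h^{n-kp+p}(t,x)\le F_{k-1,p}^p(t)$ and Fubini gives
\[
\|\phi_j'\, dt\wedge \eta\|_{L^p}^p \le \|\omega\|_{L^p}^p \int_a^b F_{k-1,p}^p(t)\, |\phi_j'(t)|^p\, \Phi(t)^{p-1}\, dt.
\]
Differentiating the cutoff yields $|\phi_j'(t)|^p \le C\, f_{k-1,p}^{-pp'}(t)\, \Phi(t)^{-p}\, |\log\Phi(t)|^{-p}$ on the slab $\{j\le \log\log\Phi(t)\le j+1\}$, so the previous integral is bounded by a constant multiple of $\int_{\{\log\log\Phi(t)\ge j\}} F_{k-1,p}^p(t)\, f_{k-1,p}^{-pp'}(t)\, \Phi(t)^{-1}\, |\log\Phi(t)|^{-p}\, dt$. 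This is a tail of the integral $A_{\delta_0,b}$ and therefore vanishes as $j\to\infty$ by the hypothesis $A_{\delta_0,b}<\infty$. Hence $\omega \in \overline{B}_{q,p}^k(C_{a,b}^h N, N_a)$, and its reduced class is zero.

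The main obstacle is identifying the correct cutoff. One wants $|\phi_j'|^p$ to carry exactly the weight $f_{k-1,p}^{-pp'}\,\Phi^{-p}\,|\log\Phi|^{-p}$ so that, after the H\"older bound on $\eta$ and Fubini, the error term is precisely the integrand of $A_{\delta_0,b}$. Reverse-engineering from this requirement — essentially demanding that $\phi_j'(t)\,dt$ behave like a bump in the variable $\log\log\Phi(t)$ — pins down the double-logarithmic cutoff $\phi_j(t)=\chi(\log\log \Phi(t)-j)$ as the natural choice. It is this $\log\log$-scaling, rather than a cutoff in $\Phi$ or in $\log\Phi$ alone, that produces the extra $|\log\Phi|^{-p}$ factor matching the hypothesis.
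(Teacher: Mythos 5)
Your proposal is correct and follows essentially the same route as the paper: anchor the primitive at $N_a$ via $S_a$, use the H\"older bound of Lemma~\ref{ineq} on $[a,t]$, and control the error with a double-logarithmic cutoff in $\Phi(t)=\int_a^t f_{k-1,p}^{-p'}$; the paper simply realizes the cutoff as the piecewise-linear $\varphi_\delta(\tau)=\min(1,\max(1+\varphi(\delta)-\varphi(\tau),0))$ with $\varphi=\log|\log I|$ rather than your smooth $\chi(\log\log\Phi-j)$, which is an inessential variant. The one small looseness in your write-up is the claim that a relative cocycle can be taken to vanish on a neighborhood of $N_a$; the paper instead keeps $\omega$ as a limit of such forms $\omega_j$ and shows $S_a\omega_j\to S_a\omega$ on each $C^h_{a,e}N$, so $S_a\omega$ lands in the relative space without assuming $\omega$ itself vanishes near the boundary face.
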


\begin{proof}
Suppose that $\omega\in C^{\infty}\Omega_{p,p}^{k}(C_{a,b}^{h}N,N_{a})$
and $d\omega=0$. The form $\omega$ is the limit in $\Omega_{p,p}^{k}(C_{a,b}^{h}N)$
of a sequence $\omega_{j}$ of smooth forms each of which is equal to zero
on some neighborhood of $N_{a}$.

As above, for $a\le c<d\le b$ denote by $C_{c,d}^{h}N$ the product
$[c,d)\times N$ with the metric induced from $C_{a,b}^{h}N$. For
every $e\in(a,b)$, Lemma \ref{ineq} implies that the operator 
$S_{a}:\Omega_{p,p}^{k}(C_{a,b}^{h}N)\to\Omega_{q,p}^{k-1}(C_{a,e}^{h}N)$
is bounded. Hence, $S_{a}\omega_{j}\to S_{a}\omega$ in $\Omega_{q,p}^{k-1}(C_{a,e}^{h}N)$.
Each of the forms $S_{a}\omega_{j}$ vanishes on some neighborhood of~$N_{a}$. Therefore, 
$S_{a}\omega\in\Omega_{q,p}^{k-1}(C_{a,e}^{h}N,N_{a})$ for all $e\in(a,b)$. Then the fact 
that $i_{a}^{*}=0$ and relation~(\ref{eq:homM}) give the equality $dS_{a}\omega=\omega$.

Consider the functions 
\begin{gather*}
I(\tau)=\int_{a}^{\tau}f_{k-1,p}^{-p'}(t)dt,\quad\varphi(\tau)=\log|\log I(\tau)|;\\
\varphi_{\delta}(\tau)=\begin{cases}
1 & \text{if \ensuremath{\tau\le\delta},}\\
\min(1,\max(1+\varphi(\delta)-\varphi(\tau),0)) & \text{if \ensuremath{\tau\ge\delta}.}
\end{cases}
\end{gather*}

The function $\varphi(t)$ is defined for $\tau$ sufficiently close
to $b$ and $\varphi_{\delta}(t)$ also exists only for $\delta$
that are sufficiently close to b.

Since $d(\varphi_{\delta}S_{a}\omega)=d\varphi_{\delta}\wedge S_{a}\omega+\varphi_{\delta}\omega$,
it follows that 
\begin{multline*}
\|d(\varphi_{\delta}S_{a}\omega)-\omega\|_{L^{p}(C_{a,b}^{h}N,\Lambda^{k})}=\|d(\varphi_{\delta}S_{a}\omega)-\omega\|_{L^{p}(C_{\delta,b}^{h}N,\Lambda^{k})}\\
\le\|d\varphi_{\delta}\wedge S_{a}\omega\|_{L^{p}(C_{\delta,b}^{h}N,\Lambda^{k})}+\|(\varphi_{\delta}-1)\omega\|_{L^{p}(C_{\delta,b}^{h}N,\Lambda^{k})}.
\end{multline*}
 Since $|\varphi_{\delta}-1|\le1$, we have 
\[
\|(\varphi_{\delta}-1)\omega\|_{L^{p}(C_{\delta,b}^{h}N,\Lambda^{k})}\le\|\omega\|_{L^{p}(C_{\delta,b}^{h}N,\Lambda^{k})}.
\]

By~(\ref{eq:norm}) and Lemma~\ref{ineq} for $p=q$, for $\delta$
sufficiently close to $b$ we infer 
\begin{multline*}
\|d\varphi_{\delta}\wedge S_{a}\omega\|_{L^{p}(C_{\delta,b}^{h}N,\Lambda^{k})}^{p}=\int_{\delta}^{b}h^{n-kp+p}(\tau,x)\biggl|\frac{d\varphi_{\delta}}{d\tau}\biggr|^{p}\biggl|\int_{a}^{\tau}\omega\biggr|_{N}^{p}dx\, d\tau\\
\le\int_{\delta}^{b}h^{n-kp+p}(\tau,x)\biggl|\frac{d\varphi_{\delta}}{d\tau}\biggr|^{p}\biggl(\int_{a}^{\tau}f_{k-1,p}^{-p'}(t)dt\biggr)^{\frac{p}{p'}}\, d\tau\,\|\omega\|_{L^{p}(N,\Lambda^{k})}^{p}\\
\le\int_{\delta}^{b}\!\! F_{k-1,p}^{p}(\tau)f_{k-1,p}^{-pp'}(\tau)\biggl(\int_{a}^{\tau}\!\! f_{k-1,p}^{-p'}(t)dt\! \biggr)^{-1}
\biggl|\log\biggl(\int_{a}^{\tau}\!\! f_{k-1,p}^{-p'}(t)\, dt\biggr)\biggr|^{-p}\!\! d\tau\,\|\omega\|_{L^{p}(N,\Lambda^{k})}^{p}.
\end{multline*}

By hypothesis, the last quantity vanishes as $\delta\to b$. Thus,
$d(\varphi_{\delta}S_{a}\omega)\to\omega$ as $\delta\to b$. The
function $\varphi_{\delta}$ is equal to zero in some neighborhood
of $b$. Therefore, $\varphi_{\delta}S_{a}\omega\in\Omega_{q,p}^{k}(C_{a,b}^{h}N,N_{a})$.
This shows that ${\overline{H}}_{q,p}^{k}(C_{a,b}^{h}N,N_{a})=0$. 
\end{proof}

\begin{rem}
The paper \cite{GKSh90_1} contains the following assertion for a~warped
cylinder $C_{a,b}^{h}N$ \cite[Theorem~2]{GKSh90_1}:
\smallskip

\textit{If $\int\limits _{a}^{b}h^{-(\frac{n}{p}-k+1)p'}(t)dt<\infty$
then $\overline{H}_{p,p}^{k}(C_{a,b}^{h}N,N_{a})=0$. } 
\smallskip

Suppose that $\int\limits_{a}^{b}h^{-(\frac{n}{p}-k+1)p'}(t)dt<\infty$. We infer
\begin{multline*}
\int_{\delta}^{b} h^{-(\frac{n}{p}-k+1)p'}(\tau)
\biggl(\int_{a}^{\tau} h^{-(\frac{n}{p}-k+1)p'}(t) dt\biggr)^{-1}\biggl|\log\biggl(\int_{a}^{\tau}
h^{-(\frac{n}{p}-k+1)p'}(t)\, dt\biggr)\biggr|^{-p}\, d\tau
\\
= \frac{1}{p-1} \int_\delta^b \frac{d}{d\tau} 
\left[ \log \left( \int_a^\tau h^{-(\frac{n}{p}-k+1)p'}(t)\, dt \right)  \right]^{1-p} d\tau
\\
= \frac{1}{p-1} \left[ \log \left( \int_a^\delta h^{-(\frac{n}{p}-k+1)p'}(t) dt \right) \right]^{1-p}
\to 0 \ \text{as} \ \delta\to b.
\end{multline*}

Thus, Theorem~\ref{thm:rel} generalizes this assertion to twisted cylinders~$C_{a,b}^{f}N$
with $N$~closed. 
\end{rem}
\vspace{2mm}

Remark \ref{rem: absolute} and Theorem~\ref{thm:rel} imply

\begin{thm}\label{cor:special-rel}  
Let $N$ be a~closed smooth $n$-dimensional Riemannian manifold.                                               
If $p\ge q>1$, $b=\infty$, and $\frac{n}{p}$ is an integer 
then \hbox{$\overline{H}_{q,p}^{\frac{n}{p}+1}(C_{a,b}^{h}N,N_{a})=0$}. 

In particular, if $1<q\le 2$ and $n$ is even then
\textup{$\overline{H}_{q,2}^{\frac{n}{2}+1}(C_{a,b}N,N_{a})=0$.}
\end{thm}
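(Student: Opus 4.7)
The plan is to specialize Theorem~\ref{thm:rel} to degree $k = \frac{n}{p}+1$, where the weight functions $f_{k-1,p}$ and $F_{k-1,p}$ collapse to~$1$. Since $k-1 = \frac{n}{p}$ is an integer, Remark~\ref{rem: absolute}(1) applied to degree $k-1$ yields
\[
f_{k-1,p}(t) = F_{k-1,p}(t) \equiv 1 \quad \text{for all } t \in [a,\infty).
\]
This is the key observation: the hypotheses of Theorem~\ref{thm:rel} reduce to purely elementary integral conditions on the variable $\tau$ alone, with no dependence on the twisting function~$h$.

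Next I would verify the two hypotheses of Theorem~\ref{thm:rel} in turn. The divergence condition is immediate:
\[
\tilde{I}_{a,\infty} = \int_a^\infty f_{k-1,p}^{-p'}(t)\, dt = \int_a^\infty dt = \infty.
\]
For the finiteness condition, after substituting the trivial weights we obtain
\[
A_{\delta_0,\infty} = \int_{\delta_0}^\infty \frac{d\tau}{(\tau-a)\,\bigl|\log(\tau-a)\bigr|^p}.
\]
Choosing $\delta_0 > a+e$ so that $\log(\tau-a) > 1$ on the domain of integration, the change of variables $u = \log(\tau-a)$ converts this into $\int_{\log(\delta_0-a)}^\infty u^{-p}\, du$, which is finite precisely because $p > 1$. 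Hence $A_{\delta_0,\infty} < \infty$ for $\delta_0$ sufficiently large, and Theorem~\ref{thm:rel} yields $\overline{H}_{q,p}^{\frac{n}{p}+1}(C_{a,\infty}^h N, N_a) = 0$.

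The particular case stated in the theorem is then immediate: if $n$ is even and $1 < q \le 2$, take $p = 2$, so that $\frac{n}{p} = \frac{n}{2}$ is an integer and $p \ge q$, and the general statement delivers $\overline{H}_{q,2}^{\frac{n}{2}+1}(C_{a,\infty}^h N, N_a) = 0$.

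I do not anticipate a real obstacle here, since the proof is essentially a bookkeeping application of Theorem~\ref{thm:rel} together with Remark~\ref{rem: absolute}(1); the only point requiring any care is ensuring that $\delta_0$ is chosen in the region where $\log(\tau-a)$ is positive and bounded below, so that the Bertrand-type integral $\int du/u^p$ is honestly convergent. Everything else is symbolic substitution into the general theorem.
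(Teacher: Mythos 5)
Your proof is correct and follows the paper's own argument: both specialize Theorem~\ref{thm:rel} at $k=\frac{n}{p}+1$, use Remark~\ref{rem: absolute}(1) to get $f_{k-1,p}=F_{k-1,p}\equiv 1$, observe $\tilde{I}_{a,\infty}=\infty$, and compute $A_{\delta_0,\infty}=\int_{\delta_0}^{\infty}(\tau-a)^{-1}|\log(\tau-a)|^{-p}\,d\tau<\infty$ since $p>1$. You merely spell out the Bertrand-integral substitution and the choice of $\delta_0$ that the paper leaves implicit.
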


\begin{proof}
In this case, $A_{\delta_0,b}=\int_{\delta_0}^b(\tau-a)^{-1}\left(\log(\tau-a)\right)^{-p}d\tau<\infty$
for any $a,b$ and, by Theorem \ref{thm:rel}, $\overline{H}_{q,p}^{k}(C_{a,b}^{h}N,N_{a})=0$. 
\end{proof}

\section{Asymptotic Twisted Cylinders}

\begin{defn}
We refer to a pair $(M,X)$ consisting of an $m$-dimensional manifold $M$ and
an $n$-dimensional compact submanifold~$X$ with boundary as an {\it asymptotic twisted cylinder}
$AC_{a,b}^{h}\partial X$ if $M\setminus X$ is bi-Lipschitz diffeomorphic equivalent 
to the~twisted cylinder $C_{a,b}^{h}\partial X$. 
\end{defn}

Theorem \ref{thm:rel} readily implies

\begin{thm}\label{thm:app-rel}
Let $(M,X)=AC_{a,b}^{h}\partial X$ be an~asymptotic twisted cylinder. Assume that 
$p\!\ge\! q\!>\!1$, 
$$
\int_a^b f_{k-1,p}^{-p'} (t)dt = \infty,
$$ 
and the integral
$$
A_{\delta_0,b}:= 
 \int_{\delta_0}^b \frac{F_{k-1,p}^p(\tau)}{ f_{k-1,p}^{pp'}(\tau)} 
\biggl( \int_a^\tau f_{k-1,p}^{-p'}(t) dt\biggr)^{-1} 
\biggl|\log\biggl(\int_a^\tau f_{k-1,p}^{-p'}(t) \, dt\biggr)\biggr|^{-p} \, d\tau
$$
is finite for some $\delta_0\in (a,b)$. 

Then $\overline{H}_{q,p}^{k}(M,X)=0$.
\end{thm}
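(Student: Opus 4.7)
The plan is to reduce Theorem \ref{thm:app-rel} to Theorem \ref{thm:rel} by exploiting the bi-Lipschitz invariance of reduced $L_{q,p}$-cohomology in the relative setting. Since $(M,X) = AC_{a,b}^h \partial X$ just means that $M\setminus\operatorname{int}(X)$ is bi-Lipschitz equivalent to the twisted cylinder $C_{a,b}^h\partial X$, with $\partial X$ corresponding to $N_a = \{a\}\times\partial X$, a bi-Lipschitz map pulls back the metric with bounded distortion, so it induces bounded-with-bounded-inverse isomorphisms on all $L^p$-norms of $k$-forms. Combined with the fact that exterior differentiation is preserved by diffeomorphisms, one gets induced isomorphisms on $\Omega_{q,p}^k$-norms.

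First, I would set up the relative identification. A form $\omega$ representing a class in $\overline{H}_{q,p}^k(M,X)$ is, by definition of the relative spaces, approximated in $L^p$ by forms vanishing on a neighborhood of $X$. Such forms are supported in the open set $M\setminus X$, and via the bi-Lipschitz equivalence $\Phi: M\setminus\operatorname{int}(X)\to C_{a,b}^h\partial X$ they transfer to forms on $C_{a,b}^h\partial X$ vanishing on a neighborhood of~$N_a$. Extending by zero where needed (and inverting via $\Phi^{-1}$ in the other direction), I would verify that $\Phi^{*}$ defines a topological isomorphism
\[
\Phi^{*}: L^p(C_{a,b}^h\partial X, N_a, \Lambda^k)\to L^p(M,X,\Lambda^k)
\]
that restricts to an isomorphism $\Omega_{q,p}^k(C_{a,b}^h\partial X, N_a)\to\Omega_{q,p}^k(M,X)$ and commutes with $d$. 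This gives an induced isomorphism
\[
\overline{H}_{q,p}^k(M,X)\cong \overline{H}_{q,p}^k(C_{a,b}^h\partial X, N_a).
\]

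Second, under the stated hypotheses on $\tilde{I}_{a,b}$ and $A_{\delta_0,b}$ (which depend only on $h$ and $\partial X$, i.e., on data of the cylindrical end), Theorem \ref{thm:rel} applied to $N=\partial X$ gives $\overline{H}_{q,p}^k(C_{a,b}^h\partial X, N_a) = 0$, and hence by the isomorphism above $\overline{H}_{q,p}^k(M,X) = 0$.

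The main obstacle, and really the only point requiring care, is verifying that the bi-Lipschitz map genuinely induces an isomorphism of the relative reduced cohomology spaces. The subtlety is that the pullback under a bi-Lipschitz (not smooth) map needs to be justified for the Banach-space-level definitions of $L^p(\cdot,\cdot,\Lambda^k)$ and $\Omega_{q,p}^k(\cdot,\cdot)$; this amounts to noting that $\Phi^*$ acts on measurable forms with bounded distortion of the pointwise norm, commutes with the weak exterior derivative, and sends forms vanishing on a neighborhood of $X$ (resp.\ $N_a$) to the corresponding class. Once this is established, the two relative spaces are canonically identified as Banach spaces, and the vanishing result transfers with no further work.
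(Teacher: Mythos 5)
Your proposal is correct and follows essentially the same route as the paper: identify $\overline{H}_{q,p}^k(M,X)$ with $\overline{H}_{q,p}^k(C_{a,b}^h\partial X, N_a)$ via the bi-Lipschitz equivalence together with extension by zero, then apply Theorem~\ref{thm:rel}. (Your worry about a non-smooth bi-Lipschitz pullback is moot here, since the paper's definition of an asymptotic twisted cylinder posits a bi-Lipschitz \emph{diffeomorphism}.)
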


\begin{proof}
Note that bi-Lipschitz diffeomorphisms preserve $L_p$ and $L_q$. Moreover, extension
by zero gives a~topological isomorphism between the relative spaces $W_{r,s}(C_{a,b}^h \partial X, (\partial X)_a)$
and $W_{r,s}(M,X)$ for all~$r,s$. This gives topological isomorphisms 
$$
H^*_{r,s}(M,X)\cong H^*_{r,s}(C_{a,b}^h \partial X, (\partial X)_a); \quad
\overline{H}^*_{r,s}(M,X)\cong \overline{H}^*_{r,s}(C_{a,b}^h \partial X, (\partial X)_a)
$$
for all~$r$, $s$. The theorem now follows from Theorem~\ref{thm:rel}.
\end{proof}

\begin{rem}
In the definitions of the functions $F_{k-1,p}$ and $f_{k-1,p}$, \,$n=m-1$.

To obtain a~version of this theorem for $\overline{H}_{q,p}^{k}(M)$, we will need the~exact
sequences of a~pair for $L_{q,p}$-cohomology. 
\end{rem}

Recall that an exact sequence 
\begin{equation}\label{seqcom-1}
0\to A\overset{\varphi}{\to}B\overset{\psi}{\to}C\to 0 
\end{equation}
 of cochain complexes of vector spaces is called an {\em exact sequence
of Banach complexes} if $A$, $B$, $C$ are Banach complexes and
all mappings $\varphi^{k}$, $\psi^{k}$ are bounded linear operators.

A short exact sequence~(\ref{seqcom-1}) of Banach complexes induces
an exact sequence in cohomology 
\[
\dots\longrightarrow H^{k-1}(C)\overset{\partial^{k-1}}{\longrightarrow}H^{k}(A)\overset{\varphi^{*k}}{\longrightarrow}
H^{k}(B)\overset{\psi^{*k}}{\longrightarrow}H^{k}(C)\longrightarrow\dots
\]
 with all operators $\partial$, $\varphi^{*}$, $\psi^{*}$ bounded
and a~sequence in reduced cohomology 
\begin{equation}
\dots\longrightarrow\overline{H}^{k-1}(C)\overset{\overline{\partial}^{k-1}}{\longrightarrow}
\overline{H}^{k}(A)\overset{\overline{\varphi}^{*k}}{\longrightarrow}\overline{H}^{k}(B)
\overset{\overline{\psi}^{*k}}{\longrightarrow}\overline{H}^{k}(C)\longrightarrow\dots,\label{seqredcom-1}
\end{equation}
 where all operators $\overline{\partial}$, $\overline{\varphi}^{*}$,
$\overline{\psi}^{*}$ are bounded and the composition of any two
consecutive morphisms is equal to zero. Sequence~(\ref{seqredcom-1})
is in general not exact but exactness at its particular terms
can be guaranteed by some additional assumptions. For example, we
have (see \cite[Theorem~1\,(1)]{GKSh90_1}):

\begin{prop}
\label{prop:exact-piece} 
Given an~exact sequence~{\rm(\ref{seqcom-1})}
of Banach complexes, if \linebreak $H^{k}(C)_{0}=0$ and $\dim\partial^{k-1}(H^{k-1}(C))<\infty$
then the sequence 
$$
\overline{H}^{k-1}(C)\overset{\overline{\partial}^{k-1}}{\longrightarrow}
\overline{H}^{k}(A)\overset{\overline{\varphi}^{*k}}{\longrightarrow}\overline{H}^{k}(B)
\overset{\overline{\psi}^{*k}}{\longrightarrow}\overline{H}^{k}(C)
$$
is exact. 
\end{prop}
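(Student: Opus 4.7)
The goal is to establish exactness at the two middle terms $\overline{H}^{k}(A)$ and $\overline{H}^{k}(B)$, since the composition of consecutive maps at the cochain level already vanishes. Throughout I would freely use the short exact sequence $0\to A^{j}\to B^{j}\to C^{j}\to 0$ of Banach spaces together with the open mapping theorem: $\varphi^{j}$ is a topological embedding onto the closed subspace $\ker\psi^{j}$, and $\psi^{j}$ is an open surjection, so null sequences in $C^{j}$ lift to null sequences in $B^{j}$.

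I would first treat exactness at $\overline{H}^{k}(B)$, which uses only the hypothesis $H^{k}(C)_{0}=0$. Suppose $[\beta]\in\overline{H}^{k}(B)$ satisfies $\overline{\psi}^{*k}[\beta]=0$, i.e., $\psi(\beta)\in\overline{B^{k}(C)}$. Since $H^{k}(C)_{0}=0$ is equivalent to $B^{k}(C)$ being closed in $C^{k}$, one has $\psi(\beta)=dc$ for some $c\in C^{k-1}$. Lifting $c=\psi(b)$, the cocycle $\beta':=\beta-db$ is cohomologous to $\beta$ and satisfies $\psi(\beta')=0$, hence $\beta'=\varphi(a)$ for a unique $a\in A^{k}$; injectivity of $\varphi$ forces $a\in Z^{k}(A)$, giving $[\beta]=\overline{\varphi}^{*k}[a]$.

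For exactness at $\overline{H}^{k}(A)$, take $[\alpha]\in\ker\overline{\varphi}^{*k}$ and pick $b_{n}\in B^{k-1}$ with $db_{n}\to\varphi(\alpha)$ in $B^{k}$. Set $c_{n}:=\psi(b_{n})$, so $dc_{n}\to 0$ in $C^{k}$. Closedness of $B^{k}(C)$ and the open mapping theorem give an isomorphism $d:C^{k-1}/Z^{k-1}(C)\to B^{k}(C)$, so I can write $c_{n}=z_{n}+c'_{n}$ with $z_{n}\in Z^{k-1}(C)$ and $c'_{n}\to 0$ in $C^{k-1}$. Lifting $c'_{n}=\psi(b'_{n})$ with $b'_{n}\to 0$ and replacing $b_{n}$ by $b_{n}-b'_{n}$, I obtain $\psi(b_{n})=z_{n}\in Z^{k-1}(C)$ while $db_{n}\to\varphi(\alpha)$ is preserved. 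Then $db_{n}\in\ker\psi=\varphi(A^{k})$, so $db_{n}=\varphi(a_{n})$ with $a_{n}\in Z^{k}(A)$, and by construction $\partial^{k-1}[z_{n}]=[a_{n}]$. Since $\varphi$ is a topological embedding, $a_{n}\to\alpha$ in $A^{k}$, hence $[a_{n}]\to[\alpha]$ in the Hausdorff Banach space $\overline{H}^{k}(A)$.

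To finish, each $[a_{n}]$ lies in the image of $\overline{\partial}^{k-1}$, which is precisely the image of the finite-dimensional subspace $\partial^{k-1}(H^{k-1}(C))\subset H^{k}(A)$ under the quotient $H^{k}(A)\to\overline{H}^{k}(A)$, and is therefore finite-dimensional. Since a finite-dimensional subspace of a Hausdorff topological vector space is closed, the limit $[\alpha]$ also lies in $\mathrm{Im}\,\overline{\partial}^{k-1}$. The step I expect to require the most care is the correction $b_{n}\mapsto b_{n}-b'_{n}$: it needs $H^{k}(C)_{0}=0$ together with two applications of the open mapping theorem to upgrade the mere convergence $dc_{n}\to 0$ into an actual identification $\psi(b_{n})=z_{n}\in Z^{k-1}(C)$ without losing control of $db_{n}$; the finite-dimensionality hypothesis then enters only at the very end to close the image of $\overline{\partial}^{k-1}$.
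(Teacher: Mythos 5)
Your proof is correct. Note that the paper does not prove Proposition~\ref{prop:exact-piece} itself but cites it as \cite[Theorem~1\,(1)]{GKSh90_1}, so there is no in-paper argument to compare against; yours is a sound self-contained reconstruction. The two halves are handled in the standard way: exactness at $\overline{H}^{k}(B)$ uses only that $H^{k}(C)_{0}=0$ forces $B^{k}(C)$ to be closed, so a boundary in $\overline{B^{k}(C)}$ is an honest coboundary that can be lifted and subtracted off; exactness at $\overline{H}^{k}(A)$ requires the more delicate chain of open-mapping-theorem applications (to $d\colon C^{k-1}/Z^{k-1}(C)\to B^{k}(C)$, to the surjection $\psi$, and to the embedding $\varphi$) to produce cocycles $z_{n}\in Z^{k-1}(C)$ with $\partial^{k-1}[z_{n}]$ converging to $[\alpha]$. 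The finite-dimensionality hypothesis is used exactly where it must be, to conclude that $\mathrm{Im}\,\overline{\partial}^{k-1}=q_{A}\bigl(\partial^{k-1}(H^{k-1}(C))\bigr)$ is closed in the Banach space $\overline{H}^{k}(A)$, which lets the limit stay in the image.
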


Let $(M,X)=AC_{a,b}^{h}\partial X$ be an~asymptotic twisted cylinder.

Consider the short exact sequence of Banach complexes 
\begin{equation}
0\to C^{\infty}\Omega_{\mathbb{P}}(M,X)\overset{j}{\to}C^{\infty}\Omega_{\mathbb{P}}(M)
\overset{i}{\to}C^{\infty}\Omega_{\mathbb{P}}(X)\to 0.\label{cinfw-1}
\end{equation}
 In all the three complexes of~(\ref{cinfw-1}), 
\[
C^{\infty}\Omega_{\mathbb{P}}=\begin{cases}
C^{\infty}\Omega_{q,q} & \text{ if }q\le k-1;\\
C^{\infty}\Omega_{q,p} & \text{ if }q=k;\\
C^{\infty}\Omega_{p,p} & \text{ if }q\ge k+1;
\end{cases}
\]
$i$ is the natural embedding, and $j$ is the restriction of forms.
The cohomology of $C^{\infty}\Omega_{\mathbb{P}}(X)$ is simply the
de Rham cohomology of $X$. 

\begin{thm}\label{thm:app-abs} 
Let $(M,X)=AC_{a,b}^{h}\partial X$ be an~asymptotic twisted cylinder. Assume that \textup{$p\ge q>1$}, $H^{k}(X)=0$,
$$
\int_a^b f_{k-1,p}^{-p'} (t)dt = \infty
$$ 
and the integral
$$
A_{\delta_0,b}:= 
 \int_{\delta_0}^b \frac{F_{k-1,p}^p(\tau)}{f_{k-1,p}^{pp'}(\tau)} 
\biggl( \int_a^\tau f_{k-1,p}^{-p'}(t) dt\biggr)^{-1} 
\biggl|\log\biggl(\int_a^\tau f_{k-1,p}^{-p'}(t) \, dt\biggr)\biggr|^{-p} \, d\tau
$$
is finite for some $\delta_0\in (a,b)$. 

Then $\overline{H}_{q,p}^{k}(M)=0$.
\end{thm}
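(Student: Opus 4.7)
The plan is to deduce this from the relative vanishing result of Theorem~\ref{thm:app-rel} combined with the long exact sequence in reduced cohomology attached to the short exact sequence~(\ref{cinfw-1}) of Banach complexes. The integral conditions assumed here match exactly those of Theorem~\ref{thm:app-rel} at the same degree~$k$, so that theorem gives at once $\overline{H}_{q,p}^{k}(M,X)=0$.

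Next, I would apply Proposition~\ref{prop:exact-piece} to~(\ref{cinfw-1}) at degree~$k$ to isolate the three-term piece
\[
\overline{H}_{q,p}^{k}(M,X)\overset{\overline{\varphi}^{*k}}{\longrightarrow}\overline{H}_{q,p}^{k}(M)\overset{\overline{\psi}^{*k}}{\longrightarrow}\overline{H}_{q,p}^{k}(X),
\]
which, under the hypotheses $H^{k}(X)_{0}=0$ and $\dim\partial^{k-1}H^{k-1}(X)<\infty$, is exact at the middle term. Since $X$ is a~compact smooth manifold with boundary, the complex $C^{\infty}\Omega_{\mathbb{P}}(X)$ computes the ordinary de~Rham cohomology of~$X$ (the mixed-regularity subscripts in~$\mathbb{P}$ collapse because all $L^{r}$ norms on forms over~$X$ are comparable). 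Consequently $H^{*}(X)$ is finite-dimensional and Hausdorff, so both hypotheses of Proposition~\ref{prop:exact-piece} hold automatically. Under the same identification, the assumption $H^{k}(X)=0$ yields $\overline{H}_{q,p}^{k}(X)=0$.

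With both outer terms of the exact three-term sequence equal to zero, exactness at the middle immediately forces $\overline{H}_{q,p}^{k}(M)=0$, which is the desired conclusion. The main (and essentially only) delicate point is the identification of the cohomology of $C^{\infty}\Omega_{\mathbb{P}}(X)$ with the de~Rham cohomology of~$X$, needed to verify the two hypotheses of Proposition~\ref{prop:exact-piece}; once this is in hand, the proof collapses to a~one-line diagram chase.
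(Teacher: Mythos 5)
Your proof matches the paper's own argument: first invoke Theorem~\ref{thm:app-rel} to obtain $\overline{H}^{k}_{q,p}(M,X)=0$, then apply Proposition~\ref{prop:exact-piece} to the short exact sequence~(\ref{cinfw-1}), using the fact that $C^{\infty}\Omega_{\mathbb{P}}(X)$ computes the ordinary de~Rham cohomology of the compact manifold~$X$ (hence finite-dimensional and Hausdorff), to conclude $\overline{H}^{k}_{q,p}(M)=0$ from $H^{k}(X)=0$. One small inaccuracy worth flagging: on a compact manifold the various $L^{r}$ norms are \emph{not} mutually comparable (H\"older only gives the nested inclusions $L^{r}\subset L^{s}$ for $r\ge s$), so the parenthetical reason you give for the identification of $C^{\infty}\Omega_{\mathbb{P}}(X)$ with the de~Rham complex is not quite right; the identification itself is nevertheless correct (and is exactly what the paper asserts after~(\ref{cinfw-1})), so the rest of the argument stands.
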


\begin{proof}
Using Theorem~\ref{sm-cohom} and Proposition~\ref{prop:exact-piece},
we obtain the exact sequence of reduced cohomology spaces 
\begin{equation}\label{isom-1}
0\longrightarrow\overline{H}_{q,p}^{k}(M,X)\overset{\cong}{\longrightarrow}
\overline{H}_{q,p}^{k}(M)\longrightarrow0.
\end{equation}
 
By Theorem~\ref{thm:app-rel}, $\overline{H}_{q,p}^{k}(M,X)=0$ if
$p\ge q>1$, $\int_a^b f_{k-1,p}^{-p'} (t)dt = \infty$, and \mbox{$A_{\delta_0,b}<\infty$} 
for some $\delta_0\in (a,b)$. 
Hence, $\overline{H}_{q,p}^{k}(M)=0$.
\end{proof}

Theorem~\ref{cor:special-rel} and the exact sequence~(\ref{isom-1}) readily imply

\begin{thm}\label{cor:app-abs-special}
Let $(M,X)=AC_{a,b}^{h}\partial X$ be an~asymptotic twisted cylinder with $\dim X=n$. 
If $p\ge q>1$, $\frac{n}{p}$ is an~integer, and 
$H^{\frac{n}{p}}(X)=H^{\frac{n}{p}+1}(X)=0$ then $\overline{H}_{q,p}^{\frac{n}{p}+1}(M)=0$. 

In particular, if $1<q\le 2$, $n$ is even, and $H^{\frac{n}{2}}(X)=H^{\frac{n}{2}+1}(X)=0$
then \textup{$\overline{H}_{q,2}^{\frac{n}{2}+1}(M)=0$.}
\end{thm}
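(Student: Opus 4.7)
The plan is a two-step reduction: first obtain vanishing of the relative cohomology $\overline{H}_{q,p}^{\frac{n}{p}+1}(M,X)$ from Theorem~\ref{cor:special-rel}, then pass to the absolute cohomology via the long exact sequence of the pair $(M,X)$ in reduced $L_{q,p}$-cohomology.

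For the first step, I would apply Theorem~\ref{cor:special-rel} to the closed base manifold $\partial X$. The integer hypothesis puts us in the regime of Remark~\ref{rem: absolute}(1), so the constants $f_{k,p}$, $F_{k,p}$ are comparable to $1$ at the relevant degrees, making the integrals $\tilde I_{a,b}$ and $A_{\delta_0,b}$ in the hypothesis of Theorem~\ref{thm:rel} reduce to the elementary ones computed in the proof of Theorem~\ref{cor:special-rel}. This yields $\overline{H}_{q,p}^{\frac{n}{p}+1}(C_{a,b}^h\partial X,(\partial X)_a)=0$, and the bi-Lipschitz identification $M\setminus X\sim C_{a,b}^h\partial X$ transfers this to $\overline{H}_{q,p}^{\frac{n}{p}+1}(M,X)=0$ exactly as in the proof of Theorem~\ref{thm:app-rel}, via extension by zero of forms vanishing near $(\partial X)_a$.

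For the second step, I would feed the short exact sequence of Banach complexes~(\ref{cinfw-1}) into Proposition~\ref{prop:exact-piece}. Because $X$ is compact, the cohomology of $C^\infty\Omega_{\mathbb{P}}(X)$ coincides with the ordinary de Rham cohomology $H^*(X)$, which is finite-dimensional; hence the hypotheses $H^k(C)_0=0$ and $\dim\partial^{k-1}(H^{k-1}(C))<\infty$ of the proposition are automatic. Setting $k=\frac{n}{p}+1$, I obtain the four-term exact sequence
\[
H^{k-1}(X)\longrightarrow\overline{H}_{q,p}^{k}(M,X)\longrightarrow\overline{H}_{q,p}^{k}(M)\longrightarrow H^k(X),
\]
whose outer terms vanish by the assumption $H^{\frac{n}{p}}(X)=H^{\frac{n}{p}+1}(X)=0$. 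Consequently the middle arrow is an isomorphism, and combining with step one forces $\overline{H}_{q,p}^{\frac{n}{p}+1}(M)\cong\overline{H}_{q,p}^{\frac{n}{p}+1}(M,X)=0$. The ``in particular'' statement is the specialization $p=2$, $n$ even.

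The argument is essentially bookkeeping; the real work has already been done in Theorem~\ref{cor:special-rel} and in the exact-sequence machinery behind Theorem~\ref{thm:app-abs}. The one place to be careful—and what I would regard as the main (minor) obstacle—is the simultaneous appearance of $H^{k-1}(X)$ and $H^{k}(X)$ in the four-term sequence: both must vanish to conclude an isomorphism, which is precisely why the corollary's hypothesis names two consecutive de Rham groups of $X$ rather than only the single group $H^k(X)$ appearing in Theorem~\ref{thm:app-abs}.
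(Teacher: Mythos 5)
Your proposal is correct and follows essentially the same route as the paper: the paper cites Theorem~\ref{cor:special-rel} to obtain $\overline{H}_{q,p}^{\frac{n}{p}+1}(M,X)=0$ and then invokes the exact sequence~(\ref{isom-1}) coming from~(\ref{cinfw-1}) and Proposition~\ref{prop:exact-piece}, which is exactly your two-step reduction. Your closing observation about why both $H^{\frac{n}{p}}(X)$ and $H^{\frac{n}{p}+1}(X)$ appear is a fair reading of the paper's use of the four-term sequence (though for the vanishing conclusion alone only the right-hand term $H^{\frac{n}{p}+1}(X)=0$ is strictly required, since surjectivity of the middle map already suffices once the relative group vanishes).
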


Using the duality theorem (Theorem~\ref{thm:duality}), we now reformulate this result 
for cohomology with compact support (interior cohomology). By the classical Poincar\'e 
duality, $H^{\frac{n}{p}}(X)=H^{n-\frac{n}{p}}(X)=H^{\frac{n}{p'}}(X)$ and 
$H^{\frac{n}{p}+1}(X)=H^{n-\frac{n}{p}-1}(X)=H^{\frac{n}{p'}-1}(X)$ (where 
$\frac{1}{p}+\frac{1}{p'}=1$). If  $p\ge q>1$ then $q' \ge p'> 1$. 

Theorem~\ref{cor:app-abs-special} and Theorem ~\ref{thm:duality} immediately imply

\begin{thm}\label{cor:app-int-special}
Let $(M,X)=AC_{a,b}^{h}\partial X$ be an asymptotic twisted cylinder with $\dim X=n$. 
If $q\ge p>1$, $\frac{n}{p}$ is an~integer, and 
$H^{\frac{n}{p}-1}(X)=H^{\frac{n}{p}}(X)=0$ then $\overline{H}_{p,q;0}^{\frac{n}{p}}(M)=0$. 

In particular, if $q\!\ge\! 2$, $n$ is even, and $H^{\frac{n}{2}-1}(X)\!=\!H^{\frac{n}{2}}(X)\!=0$
then $\overline{H}_{2,q;0}^{\frac{n}{2}}(M)\!=\!0$.
\end{thm}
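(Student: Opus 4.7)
The plan is to deduce this as a formal consequence of Theorem~\ref{cor:app-abs-special} combined with the duality isomorphism of Theorem~\ref{thm:duality}. First I would dualize the interior reduced cohomology on the left by Theorem~\ref{thm:duality}, obtaining
\[
\bigl(\overline{H}_{p,q;0}^{\,n/p}(M)\bigr)^{*} \;\cong\; \overline{H}_{q',p'}^{\,\dim M - n/p}(M);
\]
since a Banach space is trivial if and only if its continuous dual is trivial, this reduces the problem to proving the vanishing of the absolute reduced cohomology on the right.

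Next I would check that the pair of conjugate exponents $(\tilde p,\tilde q):=(p',q')$ fits the hypotheses of Theorem~\ref{cor:app-abs-special} at the complementary degree. The inequality $\tilde p\ge\tilde q>1$ is exactly the assumed $q\ge p>1$; the integrality of $n/\tilde p = n/p'$ is equivalent to that of $n/p$ via $n/p + n/p' = n$; and the two cohomological vanishings $H^{n/\tilde p}(X)=H^{n/\tilde p+1}(X)=0$ required by Theorem~\ref{cor:app-abs-special} correspond, through the Poincar\'e identifications $H^{k}(X)\cong H^{n-k}(X)$ recorded in the paragraph preceding the theorem, to the hypothesis $H^{n/p-1}(X)=H^{n/p}(X)=0$ of the present statement. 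An invocation of Theorem~\ref{cor:app-abs-special} then yields the required vanishing on the absolute side, which by the duality step above is equivalent to the desired $\overline{H}_{p,q;0}^{\,n/p}(M)=0$.

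The proof is therefore purely formal, and all of the analytic content sits in Theorem~\ref{cor:app-abs-special} and in the duality theorem. The only step that needs any care is the bookkeeping of the conjugations $p\leftrightarrow p'$, $q\leftrightarrow q'$ and of the complementary degree $k\leftrightarrow \dim M - k$, together with the Poincar\'e-duality rewriting of the hypotheses on $X$; one must check that the numerical conditions imposed in the present theorem are precisely those required by Theorem~\ref{cor:app-abs-special} under the substitution $(p,q)\mapsto(p',q')$, and that the degree produced by the duality really is of the form $n/\tilde p + 1$ treated by that corollary.
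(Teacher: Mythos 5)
Your proof takes the same route the paper takes: the paper derives Theorem~\ref{cor:app-int-special} by combining Theorem~\ref{cor:app-abs-special} with the H\"older--Poincar\'e duality of Theorem~\ref{thm:duality} and the classical Poincar\'e duality on~$X$, and your argument is that derivation written out (in the reverse direction, starting from the claim and reducing to the cited results). Your verification of the exponent relation $p'\geq q'>1\Leftrightarrow q\geq p>1$, of the integrality of $n/p'$, and of the translation of the cohomological hypotheses under $H^{k}(X)\cong H^{n-k}(X)$ is correct; the one step you explicitly flag but do not carry out --- that the complementary degree $\dim M - n/p$ coming from Theorem~\ref{thm:duality} equals the degree $n/p'+1$ in Theorem~\ref{cor:app-abs-special} --- is indeed where the remaining arithmetic lives, and you should work it through (it imposes a relation between $\dim M$ and $n$ that deserves to be made explicit), but this is exactly the level of detail the paper itself leaves implicit.
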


\smallskip

Recall that if a~manifold $Y$ is complete then $\overline{H}_p^k(Y)=\overline{H}_{p;0}^k(Y)$. We have

\begin{cor} \label{cor:p=q}
Let $(M,X)=AC_{a,b}^{h}\partial X$ be an asymptotic twisted cylinder with $\dim X=n$ 
and let $M$ be a~complete manifold.

If $p >1$, $\frac{n}{p}$ is an~integer, and $H^{\frac{n}{p}}(X)=H^{\frac{n}{p}+1}(X)=0$ then 
$\overline{H}_{p}^{\frac{n}{p}+1}(M)=\overline{H}_{p'}^{\frac{n}{p'}}(M)=0$. 

In particular, if $n$ is even and $H^{\frac{n}{2}}(X)=H^{\frac{n}{2}+1}(X)=0$
then \textup{$\overline{H}_{2}^{\frac{n}{2}}(M)=\overline{H}_{2}^{\frac{n}{2}+1}(M)=0$.} 
\end{cor}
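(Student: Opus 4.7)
My plan is to prove the two vanishing statements separately, in each case by invoking one of the vanishing theorems already established and specializing to $q=p$ (recall that $\overline{H}_p^k$ is shorthand for $\overline{H}_{p,p}^k$).

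For the first vanishing $\overline{H}_p^{n/p+1}(M)=0$, I would apply Theorem~\ref{cor:app-abs-special} with $q=p$. The hypotheses $p>1$, $n/p\in\mathbb{Z}$, and $H^{n/p}(X)=H^{n/p+1}(X)=0$ are precisely those assumed here, so the conclusion is immediate.

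For the second vanishing $\overline{H}_{p'}^{n/p'}(M)=0$, I would apply Theorem~\ref{cor:app-int-special} with both $p$ and $q$ there replaced by $p'$. Since the standing assumption puts $p\in(1,\infty)$, the conjugate satisfies $p'\in(1,\infty)$, and the relation $n/p+n/p'=n$ shows that $n/p'$ is also an integer. The cohomological conditions $H^{n/p'-1}(X)=H^{n/p'}(X)=0$ required by that theorem follow from Poincar\'e duality on $X$ exactly as in the remark preceding Theorem~\ref{cor:app-int-special}: namely $H^{n/p'}(X)=H^{n-n/p'}(X)=H^{n/p}(X)=0$ and $H^{n/p'-1}(X)=H^{n-n/p'+1}(X)=H^{n/p+1}(X)=0$. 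This gives $\overline{H}_{p';0}^{n/p'}(M)=0$, and the completeness of $M$ then identifies this space with $\overline{H}_{p'}^{n/p'}(M)$, producing the desired vanishing. The ``in particular'' case is the specialization $p=p'=2$. The whole argument is essentially a bookkeeping exercise matching conjugate exponents and Poincar\'e-dual degrees; all analytic content already lives in Theorems~\ref{cor:app-abs-special} and~\ref{cor:app-int-special}, and the only step requiring even mild care is the Poincar\'e-duality identification of the hypotheses on $X$ (which has boundary in general), but the authors already invoke exactly this identification in the preceding paragraph, so I would follow the same convention without further comment.
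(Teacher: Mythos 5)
Your proof is correct and matches the paper's intended derivation: the first vanishing is Theorem~\ref{cor:app-abs-special} with $q=p$, and the second is Theorem~\ref{cor:app-int-special} with both exponents set to $p'$ (whose cohomological hypotheses on $X$ you match via the same Poincar\'e-duality identification the authors invoke in the remark preceding that theorem), finished by the completeness identity $\overline{H}_{p';0}^{k}(M)=\overline{H}_{p'}^{k}(M)$. The paper states the corollary without proof immediately after recalling that identity, which indicates precisely this route, so your bookkeeping is exactly what the authors have in mind.
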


\smallskip
Suppose that $(M,g)$ is an $m$-dimensional Cartan--Hadamard
manifold with sectional curvature $K\leq-1$ and Ricci curvature $Ric\ge-(1+\epsilon)^{2}(m-1)$
and $B$ is a~unit closed geodesic ball in $(M,g)$. Then, as was observed in the proof
of~\cite[Lemma~3.1]{GT2009}, the pair $(M,B)$ is an~asymptotic twisted cylinder endowed 
with the Riemannian metric 
\[
g=dr^{2}+g_{r}.
\]
Here $g_{r}$ is a Riemannian metric on the sphere $\{r\}\times\mathbb{S}^{m-1}$.
Since $B$ is a~topological ball, $H^{k}(B)=0$ for any $k=1,2,\dots,m$.
Theorem~\ref{cor:app-abs-special} gives: 

\begin{prop}\label{prop:Cartan_Hadamard-abs-special} 
Let $(M,g)$ be an $m$-dimensional Cartan--Hadamard manifold {\rm(}that is, 
a complete simply-connected Riemannian manifold of nonpositive sectional 
curvature{\rm)} with sectional curvature $K\!\!\leq\!-1$ and Ricci curvature 
\mbox{$Ric\!\ge\!\!-(1\!+\!\epsilon)^{2}(m\!-\!1\!)$}. If $p\ge q>1$ and $\frac{m-1}{p}$ 
is an integer then $\overline{H}_{q,p}^{\frac{m-1}{p}+1}(M,g)=0$.

In particular,
\begin{itemize} 
\item if $1<q\le 2$ and $m$ is odd then \textup{$\overline{H}_{q,2}^{\frac{m-1}{2}+1}(M,g)=0$;}
\item if $p >1$ and $\frac{m-1}{p}$ is an~integer then $\overline{H}_{p}^{\frac{m-1}{p}+1}(M,g)=\overline{H}_{p'}^{\frac{m-1}{p'}}(M,g)=0;$ 
\item if  $m$ is odd then \textup{$\overline{H}_{2}^{\frac{m-1}{2}}(M,g)=\overline{H}_{2}^{\frac{m+1}{2}}(M,g)=0$.}
\end{itemize}

\end{prop}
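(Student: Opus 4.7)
The key input is the observation made in the paragraph just preceding the statement: if $B\subset M$ is a unit closed geodesic ball centered at a base point, then $(M,B)$ is an asymptotic twisted cylinder of the form $AC_{1,\infty}^{h}\partial B$ with $\partial B\cong\mathbb{S}^{m-1}$, and $M\setminus B$ carries the polar metric $dr^{2}+g_{r}$. The bounds $K\le -1$ and $\mathrm{Ric}\ge -(1+\epsilon)^{2}(m-1)$ are precisely what delivers this bi-Lipschitz equivalence via Hessian comparison on Cartan--Hadamard manifolds (as in \cite[Lemma~3.1]{GT2009}), and in particular they give $b=\infty$.

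My plan is to reduce the main assertion directly to Theorem~\ref{cor:app-abs-special} applied with $X=B$ and $n=\dim\partial B=m-1$. The only hypothesis that really needs checking is the cohomological one: since $B$ is contractible, $H^{k}(B)=0$ for every $k\ge 1$, and hence $H^{\frac{m-1}{p}}(B)=H^{\frac{m-1}{p}+1}(B)=0$ whenever $\frac{m-1}{p}$ is a positive integer (the degenerate case $m=1$ being trivial). Theorem~\ref{cor:app-abs-special} then immediately yields $\overline{H}_{q,p}^{\frac{m-1}{p}+1}(M,g)=0$, which is the principal conclusion of the proposition.

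For the three consequences I would reason as follows. The first drops out by specializing to $p=2$, using that $m$ odd is equivalent to $\frac{m-1}{2}$ being an integer. The second follows by setting $q=p$ and invoking Theorem~\ref{thm:duality}: completeness of $M$ gives $\overline{H}_{p}^{k}(M)=\overline{H}_{p;0}^{k}(M)$, so $\overline{H}_{p}^{k}(M)$ is topologically dual to $\overline{H}_{p'}^{m-k}(M)$; the identity $\frac{1}{p}+\frac{1}{p'}=1$ yields
\[
m-\Bigl(\tfrac{m-1}{p}+1\Bigr)=(m-1)\Bigl(1-\tfrac{1}{p}\Bigr)=\tfrac{m-1}{p'},
\]
so the vanishing of $\overline{H}_{p}^{\frac{m-1}{p}+1}(M)$ transfers to the vanishing of $\overline{H}_{p'}^{\frac{m-1}{p'}}(M)$. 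The third statement is the special case $p=p'=2$ of the second under the assumption that $m$ is odd.

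I do not anticipate a genuine obstacle: the analytic content has already been absorbed into Theorem~\ref{cor:app-abs-special} (which itself rests on the twisted-cylinder analysis of Section~4 together with the exact sequence~\eqref{isom-1}) and into the duality theorem recalled from \cite{GT2012}. The curvature bounds enter only to certify that $(M,B)$ truly is an asymptotic twisted cylinder with $b=\infty$, a point already established in \cite{GT2009}; once that identification is in hand, the proposition is essentially a matter of bookkeeping with indices.
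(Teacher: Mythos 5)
Your argument is the paper's own: identify $(M,B)$ with an asymptotic twisted cylinder using the curvature bounds (as in \cite[Lemma~3.1]{GT2009}), observe $H^{k}(B)=0$ for $k\ge 1$ since $B$ is a ball, and apply Theorem~\ref{cor:app-abs-special} with $n=m-1$; the bullets then follow by specialization to $p=2$ and by duality via $\overline{H}_p^k(M)=\overline{H}_{p;0}^k(M)$ for complete $M$ (which is exactly what Corollary~\ref{cor:p=q} packages). Your direct duality computation $m-\bigl(\tfrac{m-1}{p}+1\bigr)=\tfrac{m-1}{p'}$ is correct and simply unwinds that corollary rather than citing it, so the routes are essentially identical.
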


\section{Examples}
Let us consider some examples.  Unfortunately, our methods only 
make it possible to make conclusions mainly about warped cylinders and not 
about twisted cylinders. 

Below the symbols $C$, $C_{1}$, $C_{2}$ stand for positive constants.

\begin{prop}\label{exp+}
Suppose that $a\ge0$, $b=\infty$, and $C_{1}e^{s_{1}t}\le h(t,x)\le C_{2}e^{s_{2}t}$
($s_{2}\ge s_{1}\ge0$). Then 

(1) $\overline{H}_{q,p}^{k}(C_{a,\infty}^{h}N)=0$ in each of the following cases:

(1a) $p\ge q>1$, $k=\frac{n}{p}$;

(1b) $p\ge q>1$, $s_1=s_2$, $k<\frac{n}{p}$;

(2) $\overline{H}_{q,p}^{k}(C_{a,\infty}^{h}N,N_{a})=0$ in each of the following cases:

(2a) $p\ge q>1$, $k=\frac{n}{p}+1$;

(2b) $p\ge q>1$, $k>\frac{n}{p}+1+\frac{1}{pp' s}$.

\end{prop}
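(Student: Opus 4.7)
The plan is to reduce each of the four assertions to one of the main theorems already established in the paper (Theorem~\ref{thm:abs}, Theorem~\ref{thm:rel}, Theorem~\ref{cor:special-abs}, or Theorem~\ref{cor:special-rel}) by computing the auxiliary functions $f_{\ell,p}$ and $F_{\ell,p}$ under the exponential pinching $C_1 e^{s_1 t}\le h(t,x)\le C_2 e^{s_2 t}$. The key structural observation is that, for any degree $\ell$, the sign of $\tfrac{n}{p}-\ell$ dictates whether $\min_x h^{n/p-\ell}(t,x)$ and $\max_x h^{n/p-\ell}(t,x)$ are realized near the lower or near the upper bound for $h$; thus each of $f_{\ell,p}$ and $F_{\ell,p}$ is comparable to a pure exponential in $t$ whose rate is either $s_1$ or $s_2$, with the choice determined by the sign.

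Parts (1a) and (2a) are then immediate: in both cases the relevant exponent $\tfrac{n}{p}-\ell$ vanishes (with $\ell=k$ and $\ell=k-1$, respectively), so $f_{\ell,p}\equiv F_{\ell,p}\equiv 1$, and the conclusions follow directly from Theorems~\ref{cor:special-abs} and~\ref{cor:special-rel}.

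For part (1b), the assumption $s_1=s_2=s$ gives $f_{k,p}(t)\asymp F_{k,p}(t)\asymp e^{s(n/p-k)t}$. Since $k<\tfrac{n}{p}$, the exponent $s(n-kp)$ is nonnegative, so $F_{k,p}^p$ is non-integrable on $[a,\infty)$ and $I_{a,\infty}=\infty$. A direct computation shows that $f_{k,p}^p(\tau)\,\bigl(\int_a^\tau F_{k,p}^p(t)\,dt\bigr)^{-1}$ is bounded below by a positive constant (the two exponentials cancel up to a multiplicative constant and a decaying error), so $J_{\delta_0,\infty}=\infty$ as well, and Theorem~\ref{thm:abs} gives the conclusion.

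Part (2b) is the least routine and is handled via Theorem~\ref{thm:rel}. Because $k>\tfrac{n}{p}+1$, the exponent $\tfrac{n}{p}-k+1$ is negative and the roles of the extremal bounds reverse: $f_{k-1,p}(t)\asymp e^{-s_2(k-1-n/p)t}$ and $F_{k-1,p}(t)\asymp e^{-s_1(k-1-n/p)t}$. The integrand of $\int_a^\infty f_{k-1,p}^{-p'}(t)\,dt$ then grows like $e^{p's_2(k-1-n/p)t}$, yielding $\tilde{I}_{a,\infty}=\infty$. Substituting these asymptotics into $A_{\delta_0,\infty}$ and using the identity $p'(p-1)=p$, the bulk of the exponentials cancel and the integrand reduces to a polynomial factor of order $\tau^{-p}$ multiplied by an exponential whose rate is a linear combination of $s_1$, $s_2$, and $k-1-\tfrac{n}{p}$; the quantitative threshold $k>\tfrac{n}{p}+1+\tfrac{1}{pp's}$ is precisely what forces this rate to be negative enough for $A_{\delta_0,\infty}<\infty$, and Theorem~\ref{thm:rel} then applies. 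I expect the principal obstacle to lie in carrying out this last estimate cleanly: one has to track the interaction between the logarithmic factor $|\log I|^{-p}$, the ratio $F_{k-1,p}^p/f_{k-1,p}^{pp'}$, and the factor $1/I$, and identify the precise arithmetic of the rates that produces the bound $\tfrac{1}{pp's}$.
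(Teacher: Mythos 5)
Your reduction to the paper's main theorems, and the case split according to the sign of $\tfrac{n}{p}-\ell$, is exactly the route the paper follows. Items (1a) and (2a) are handled identically. For (1b) your conclusion is correct, though the parenthetical claim that $f_{k,p}^p(\tau)\bigl(\int_a^\tau F_{k,p}^p\bigr)^{-1}$ is ``bounded below by a positive constant'' fails in the sub-case $s=0$: there the quotient decays like $1/\tau$, which is not bounded below but still has divergent integral, so the conclusion $J=\infty$ survives.

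The substantive problem is in (2b), and it is an internal inconsistency. You (correctly) compute that the logarithmic factor produces a polynomial of order $\tau^{-p}$. But then you assert that the threshold $k>\tfrac{n}{p}+1+\tfrac{1}{pp's}$ is ``precisely what forces this rate to be negative enough'' for $A_{\delta_0,\infty}<\infty$. That cannot be right: track the exponentials. With $\mu:=\tfrac{n}{p}-k+1<0$ and the ``worst-case'' endpoints $F_{k-1,p}\lesssim e^{s_1\mu\tau}$, $f_{k-1,p}\gtrsim e^{s_2\mu\tau}$, $f_{k-1,p}\lesssim e^{s_1\mu\tau}$, the integrand of $A$ is bounded by a constant times
\[
e^{(s_1p-s_2pp')\mu\tau}\cdot e^{s_1p'\mu\tau}\cdot\tau^{-p}
= e^{pp'\mu(s_1-s_2)\tau}\,\tau^{-p},
\]
using $p+p'=pp'$. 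The exponential rate $pp'\mu(s_1-s_2)$ involves only $s_1,s_2,\mu$ and vanishes exactly when $s_1=s_2$; it has no dependence on $k$ beyond the sign of $\mu$. With $s_1=s_2$ and your $\tau^{-p}$ factor, $p>1$ already gives convergence, so the threshold in $k$ plays no role at all in your version of the estimate. The paper derives the threshold because it writes the upper bound on the integrand as $C\,e^{(s_2-s_1)((k-1)p-n)\tau}\,\tau^{-s_1((k-1)p-n)p'}$, i.e.\ with polynomial decay $\tau^{-s_1((k-1)p-n)p'}$ rather than $\tau^{-p}$, and the condition $s_1((k-1)p-n)p'>1$ is exactly what makes that power integrable. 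So your proposal and the paper disagree on the polynomial exponent, and the quantitative bound $\tfrac{1}{pp's}$ that you quote as the expected outcome of the arithmetic does not in fact emerge from the arithmetic you describe. You need to either recover the paper's power $\tau^{-s_1((k-1)p-n)p'}$ from a bound you actually prove, or recognize that with the $\tau^{-p}$ power the condition $k>\tfrac{n}{p}+1$ (with $s_1=s_2$) already suffices and the proposition's hypothesis in (2b) is being established, not sharpened.
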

      
\begin{proof}
(1) Assume first that $n-kp>0$.

We have 
$$
I= \int_0^\infty F_{k,p}^p(t) dt =\infty;
$$
the integral $J_{0,\infty}$ is infinite if so is the integral
$$
\int_0^\infty e^{-(s_2-s_1)(n-kp)t} dt
$$
We infer 
$$
\int_0^{\tau}e^{s_{2}(n-kp)}dt=C_{1}e^{s_{2}(n-kp)\tau}-C
$$
Further, 
\[
\int_{0}^{\infty}e^{-(s_{2}-s_{1})(n-kp)\tau}d\tau
\]
is infinite if $(s_1-s_2)(n-kp)\ge 0$. This leads to $s_1=s_2$.

The case of $p\ge q>1$, $k=\frac{n}{p}$ stems from Theorem~\ref{cor:special-abs}.

(2) Item~(2a) immediately follows from Theorem~\ref{cor:special-rel}. 

Consider case~(2b). Since $k>\frac{n}{p}+1$, we have $(k-1)p-n>0$.  Then
$$
{\tilde I}_{0,\infty}=\int_0^\infty f_{k-1,p}^{-p'}(t) dt =\infty.
$$
Fix $\delta_0>0$ so that $\int_0^\infty f_{k-1,p}^{-p'}(t) dt > 1$. The integrand
in~$A_{\delta_0,\infty}$ is at most some function~$\gamma(\tau)$ which, as $\tau\to\infty$,
is equivalent to
$$ 
C e^{(s_2-s_1)((k-1)p-n)\tau} \tau^{-s_1 ((k-1)p-n)p'}, 
$$
and the integral from~$\delta_0$ to~$\infty$ of this expression is finite if and only if 
$s_1=s_2$ and $-s_1 ((k-1)p-n)p'<-1$, which is sufficient for the finiteness of~$A_{\delta_0,\infty}$
and is just the hypothesis of~(2b).
\end{proof}

The following situation can also be considered in a~straightforward manner
with the use of Theorems~\ref{cor:special-abs} and~\ref{cor:special-rel}:

\begin{prop}\label{t-inf}
Suppose that $a\ge 1$, $b=\infty$, and $C_{1}t^{s_{1}}\le h(t,x)\le C_{2}t^{s_{2}}$
($s_{2}\ge s_{1}\ge0$). Then 

(1) $\overline{H}_{q,p}^{k}(C_{a,\infty}^{h}N)$ is zero in each of the following cases:

(1a) $p\ge q>1$, $k=\frac{n}{p}$;

(1b) $p\ge q>1$,  $s_1=s_2$, $k<\frac{n}{p}$;

(1c) $p\ge q>1$, $s_1=s_2$, $\frac{n}{p}<k\le \frac{n}{p}+\frac{1}{p s_1}$.

(2) $\overline{H}_{q,p}^{k}(C_{a,\infty}^{h}N,N_{a})$
is zero in each of the following cases:

(2a) $p\ge q>1$, $s_{1}=s_{2}$, $\frac{n}{p}+1-\frac{1}{p' s_1}\le k \le \frac{n}{p}+1$;

(2b) $p\ge q>1$, $k=\frac{n}{p}+1$;

\begin{rem}
(1) Propositions~\ref{exp+} and~\ref{t-inf} also hold for asymptotic twisted cylinders.

(2) The results are new even for $L^p$-cohomology $(p=q)$.
\end{rem}

\end{prop}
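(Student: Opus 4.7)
The plan is to apply Theorems~\ref{thm:abs} and~\ref{thm:rel} (together with their specialisations, Theorems~\ref{cor:special-abs} and~\ref{cor:special-rel}) after making the polynomial dependence on~$t$ explicit. Setting $\beta:=\frac{n}{p}-k$ in the absolute case and $\beta:=\frac{n}{p}-k+1$ in the relative one, the bound $C_1 t^{s_1}\le h(t,x)\le C_2 t^{s_2}$ yields $f_{k,p}(t)\asymp t^{s\beta}$ and $F_{k,p}(t)\asymp t^{s'\beta}$, where $(s,s')=(s_1,s_2)$ when $\beta\ge 0$ and $(s,s')=(s_2,s_1)$ when $\beta<0$; both functions equal~$1$ when $\beta=0$. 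Cases (1a) and (2b) are then immediate consequences of Theorems~\ref{cor:special-abs} and~\ref{cor:special-rel}.

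In each of the remaining items the hypothesis $s_1=s_2=:s$ is imposed, forcing $f_{k,p}\asymp F_{k,p}$ (and analogously for $f_{k-1,p}$), so the twisted cylinder behaves like a warped one up to constants. For part~(1), I would first verify $I_{a,\infty}=\int^{\infty} t^{ps\beta}\,dt=\infty$: in~(1b) the exponent is nonnegative, while in~(1c) the bound $k\le \frac{n}{p}+\frac{1}{ps_1}$ is exactly the condition that this exponent be $\ge -1$. Since $f_{k,p}\asymp F_{k,p}$, the integrand of $J_{\delta_0,\infty}$ is, up to a positive constant, $\frac{d}{d\tau}\log\int_a^\tau F_{k,p}^p(t)\,dt$, so $J_{\delta_0,\infty}=\infty$ follows and Theorem~\ref{thm:abs} applies.

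For item~(2a) I would use the algebraic identity $p-pp'=-p'$, which combined with $f_{k-1,p}\asymp F_{k-1,p}$ yields $F_{k-1,p}^p/f_{k-1,p}^{pp'}\asymp f_{k-1,p}^{-p'}$. The integrand of $A_{\delta_0,\infty}$ is then, up to a constant, $\frac{d}{d\tau}\bigl|\log\int_a^\tau f_{k-1,p}^{-p'}(t)\,dt\bigr|^{1-p}$, exactly as in the remark following Theorem~\ref{thm:rel}; hence $A_{\delta_0,\infty}<\infty$ whenever $\tilde I_{a,\infty}=\int^{\infty} t^{-p's\beta}\,dt=\infty$. With $\beta=\frac{n}{p}-k+1\ge 0$, this divergence is equivalent to $\beta\le \frac{1}{p's}$, i.e. $k\ge \frac{n}{p}+1-\frac{1}{p's_1}$, which is precisely the stated lower bound; Theorem~\ref{thm:rel} then gives the conclusion.

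The main obstacle, and the reason $s_1=s_2$ is built into items~(1b),~(1c) and~(2a), is the genuinely twisted regime $s_1<s_2$: there the ratio $F_{k,p}/f_{k,p}$ grows as a positive power of~$t$, which spoils both the logarithmic-derivative identification used for $J_{\delta_0,\infty}$ and the analogous one for $A_{\delta_0,\infty}$. Consequently the present machinery based on Theorems~\ref{thm:abs} and~\ref{thm:rel} does not detect vanishing in that regime, in line with the caveat mentioned at the start of this section.
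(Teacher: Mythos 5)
Your proposal is correct and follows the approach the paper has in mind when it says the proposition ``can be considered in a straightforward manner'' via the special and general vanishing theorems: cases (1a), (2b) from Theorems~\ref{cor:special-abs},~\ref{cor:special-rel}, and cases (1b), (1c), (2a) from Theorems~\ref{thm:abs},~\ref{thm:rel} once $s_1=s_2$ forces $f_{k,p}\asymp F_{k,p}\asymp t^{s\beta}$. One small imprecision: the opening claim that $C_1t^{s_1}\le h\le C_2t^{s_2}$ ``yields $f_{k,p}(t)\asymp t^{s\beta}$ and $F_{k,p}(t)\asymp t^{s'\beta}$'' is not true when $s_1<s_2$ (the bounds on $h$ give only one-sided estimates on each of $f$ and $F$); since you use these asymptotics only under $s_1=s_2$, the proof itself is unaffected.
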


\end{document}